\documentclass[a4paper,10pt, twoside]{article}
\linespread{1.15}
\setlength{\parindent}{0mm}
\setlength{\parskip}{2mm}

\raggedbottom


\usepackage{url, color, epsfig, amsmath, amsthm, amssymb}
\usepackage[top=2cm, bottom=3cm, left=2.5cm, right=2.5cm]{geometry}
\usepackage{algorithm}
\usepackage[noend]{algorithmic}
\usepackage{amsmath}
\usepackage{mathtools}
\usepackage{enumerate}
\usepackage{colortbl}
\usepackage[textwidth=1.850cm, textsize=scriptsize]{todonotes}
\usepackage{afterpage}
\usepackage{multicol}
\usepackage{multirow}
\usepackage{caption}
\usepackage{wrapfig}
\usepackage{thmtools}
\usepackage{thm-restate}
\usepackage{fancyhdr}
\usepackage{subcaption}
\usepackage{enumitem}

\allowdisplaybreaks


\newtheorem{theorem}{Theorem}
\newtheorem*{theorem*}{Theorem}
\newtheorem{corollary}[theorem]{Corollary}
\newtheorem{lemma}[theorem]{Lemma}

\newtheorem*{definition*}{Definition}

\newtheorem*{example*}{Example}

\newtheorem*{claim*}{Claim}
\newtheorem*{lemma*}{Lemma}







\newcommand{\overbar}[1]{\mkern 2mu\overline{\mkern-2mu#1\mkern-1mu}\mkern 1mu}

\def\Q{\ensuremath{\mathcal{Q}}}

\def\C{\ensuremath{\mathcal{C}}}
\def\T{\ensuremath{\mathcal{T}}}
\def\P{\ensuremath{\mathcal{P}}}
\def\cP{\ensuremath{\mathcal{P}}}
\def\Q{\ensuremath{\mathcal{Q}}}

\def\T{\ensuremath{\mathcal{T}}}

\newcommand{\ex}{ {\rm ex}}
\newcommand{\cl}{ {\rm cl}}
\newcommand{\cov}{ {\rm cov}}
\newcommand{\pp}{ {\rm pp}}

\pagestyle{fancy}
\fancyhf{}

\fancyhead[LE,RO]{\thepage}

\usepackage{authblk}

\title{Clumsy packings of graphs}

\author[1]{Maria Axenovich}
\author[1]{ Anika Kaufmann }
\author[2]{Raphael Yuster}
\affil[1]{Department of Mathematics, Karlsruhe Institute of Technology, 76133 Karlsruhe, Germany}
 \affil[2]{Department of Mathematics, University of Haifa, Haifa 31905, Israel}
\begin{document}

\maketitle

\begin{abstract}
Let $G$ and $H$ be graphs. We say that $\cP$ is an {\em $H$-packing} of $G$ if $\cP$ is a set of edge-disjoint copies of $H$ in $G$. 
An $H$-packing $\cP$ is {\em maximal} if there is no other $H$-packing of $G$ that properly contains $\cP$.
Packings of maximum cardinality have been studied intensively, with several recent breakthrough results.
Here, we consider minimum cardinality maximal packings. An $H$-packing $\cP$ is {\it clumsy} if  it is maximal of minimum size.
Let $\cl(G,H)$ be the size of a clumsy $H$-packing of $G$.
We provide nontrivial bounds for $\cl(G,H)$, and in many cases asymptotically determine $\cl(G,H)$ for some generic classes of graphs $G$
such as $K_n$ (the complete graph), $Q_n$ (the cube graph),  as well as  square, triangular, and hexagonal grids.
We asymptotically determine $\cl(K_n,H)$ for every fixed non-empty graph $H$.
In particular, we prove that 
$$
\cl(K_n, H) = \frac{\binom{n}{2}- \ex(n,H)}{|E(H)|}+o(\ex(n,H)),
$$
where $\ex(n,H)$ is the extremal number of $H$.

A related natural parameter is $\cov(G,H)$, that is the smallest  number of copies of $H$ in $G$    (not necessarily edge-disjoint)  whose removal from $G$ results in an $H$-free graph.
While clearly $\cov(G,H) \le \cl(G,H)$, all of our lower bounds for $\cl(G,H)$ apply to $\cov(G,H)$ as well.
\end{abstract}

\section{Introduction}\label{sec:intro}

Let $G$ and $H$ be graphs. We say that $\cP$ is an {\em $H$-packing} of $G$ if $\cP$ is a set of edge-disjoint subgraphs of $G$ each isomorphic to $H$.
We shall refer to subgraphs isomorphic to $H$ as {\it copies} of $H$.
An $H$-packing $\cP$ is {\em maximal} if there is no other $H$-packing in $G$ that properly contains $\cP$.
An $H$-packing $\cP$ of $G$ is {\em perfect} if every edge of $G$ belongs to an element of $\cP$.
In case when there is a perfect $H$-packing of $G$, we say that $H$ {\it decomposes} $G$.
The case of perfect $H$-packings has been extensively studied. A seminal result of Wilson \cite{wilson-1975} asserts that perfect $H$-packings of $K_n$ always exist when certain (obviously necessary) divisibility conditions hold. Wilson's result has been extended from $K_n$ to graphs with a
sufficiently large minimum degree starting with Gustavsson \cite{gustavsson-1991} and culminating in results of Keevash \cite{keevash-2014} and Glock et al. \cite{GKLO-2016}.
In general, however, deciding if $G$ has a perfect $H$-packing, or computing $\pp(G,H)$, the maximum cardinality of an $H$-packing of $G$,
in NP-hard for every connected $H$ other than $K_2$ \cite{DT-1992}.

Here, we consider minimum cardinality maximal packings. An $H$-packing $\cP$ is {\it clumsy} if  it is maximal of minimum size.
Let $\cl(G, H)$ be the size of a clumsy $H$-packing of $G$. Thus, we are interested in the smallest {\em covering} of all the copies of $H$ in $G$ with
edge-disjoint copies of $H$. Let $\cov(G,H)$ be the smallest cardinality of a set of (not necessarily edge-disjoint) copies of $H$ in $G$
such that each copy of $H$ in $G$ has an edge in some element of the set. Thus, we have $\pp(G,H) \ge \cl(G,H) \ge \cov(G,H)$.
The notion of a clumsy packing was introduced by Gy\'arf\'as  et al. \cite{GLT-1988} for domino packings. It was further extended to general polyominoes by Walzer et al. \cite{WAU-2014}.

Our main results provide upper and lower bounds, and in many cases asymptotically determine $\cl(G,H)$ and $\cov(G,H)$ for
major generic classes of graph $G$
such as $K_n$ (the complete graph), $Q_n$ (the cube graph), 
as well as  square, triangular, and hexagonal grids.
To state our results we need to recall some notation.
We assume that the graphs under consideration are non-empty unless otherwise stated.
We denote the number of edges of a graph $G$ by $||G||$. Note that $\pp(G) = ||G||/||H||$ if a perfect $H$-packing of $G$ exists.
The extremal number $\ex(G,H)$ is the largest number of edges in a subgraph of $G$ that contains no copy of $H$ and $\ex(n,H)=\ex(K_n,H)$.
Using these notations, a trivial  lower bound for $\cov(G,H)$, and thus for $\cl(G,H)$ is therefore
\begin{equation}\label{e:1}
\frac{||G||-\ex(G,H)}{||H||} \le  \cov(G,H) \le  \cl(G,H)
\end{equation}
since we must cover at least $||G||-\ex(G,H)$ edges of $G$ with copies of $H$.
Our first main result concerns the case $\cl(K_n,H)$.
\begin{theorem}\label{t:kn}
Let $H$ be any fixed non-empty graph. Then,
$$
\cl(K_n, H)  =\frac{||K_n||- \ex(n, H)}{||H||} + o(\ex(n,H))\;.
$$
If $H$ is complete or bipartite, then the $o(\ex(n,H))$ term is only linear in $n$ and if $H$ is a forest, the $o(\ex(n,H))$ term is a constant.
\end{theorem}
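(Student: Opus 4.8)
The lower bound is already available: by~\eqref{e:1} --- equivalently, since the uncovered edge set of any maximal $H$-packing of $K_n$ is $H$-free and so has at most $\ex(n,H)$ edges --- one has $\cl(K_n,H)\ge\cov(K_n,H)\ge(||K_n||-\ex(n,H))/||H||$. So the plan is all about the matching upper bound, and it is the same in every case: I would build a maximal $H$-packing $\cP$ of $K_n$ whose set of uncovered edges is an $H$-free graph $L$ with $||L||$ as close to $\ex(n,H)$ as possible. Since $|\cP|\cdot||H||=||K_n||-||L||$, this yields $\cl(K_n,H)\le|\cP|=\frac{||K_n||-\ex(n,H)}{||H||}+\frac{\ex(n,H)-||L||}{||H||}$, so it suffices to exhibit an $H$-free $L\subseteq K_n$ with $\ex(n,H)-||L||$ of the required size and with $K_n-E(L)$ admitting an $H$-decomposition --- or an $H$-decomposition of all but a set $E_0$ of few edges, since then I can finish greedily: $L$ being $H$-free, every copy of $H$ still present meets $E_0$, so a greedy maximal continuation adds at most $|E_0|$ copies and makes $\cP$ maximal in $K_n$.

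\emph{Case $\chi(H)\ge 3$ (in particular $H$ complete).} I would take $L=T_{\chi(H)-1}(n)$, the balanced complete $(\chi(H)-1)$-partite graph, which is $H$-free; by the \erd--Stone--Simonovits theorem $||L||=\ex(n,H)-o(n^2)$, and $||L||=\ex(n,H)$ exactly if $H=K_r$ (Tur\'an). Then $K_n-E(L)$ is a disjoint union of $\chi(H)-1$ cliques of orders $\sim n/(\chi(H)-1)$; inside each I delete $O(1)$ vertices so that the order of the remaining clique lies in one of the finitely many residue classes for which $K_m$ has an $H$-decomposition, and apply Wilson's theorem~\cite{wilson-1975}, leaving only $O(n)$ edges uncovered inside the cliques. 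Because $H$ is not $(\chi(H)-1)$-colourable, no copy of $H$ lies inside $L$, so every copy of $H$ still present in $K_n$ meets one of those $O(n)$ edges, and a greedy continuation adds $O(n)$ copies while making $\cP$ maximal. This gives $|\cP|=\frac{||K_n||-\ex(n,H)}{||H||}+o(n^2)$, i.e.\ $+o(\ex(n,H))$ since $\ex(n,H)=\Theta(n^2)$; the error is only $O(n)$ when $H=K_r$.

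\emph{Case $\chi(H)=2$ ($H$ bipartite).} Here $\ex(n,H)=o(n^2)$ ($=\Theta(n)$ if $H$ is a forest; $H=K_2$ is trivial). I would take a near-extremal $H$-free graph $L_0$ of maximum degree $o(n)$, and delete $O(n)$ further edges --- call the result $L$ --- so that $K_n-E(L)$ becomes $H$-divisible ($||H||$ divides $||K_n-E(L)||$, and the gcd of the degrees of $H$ divides every degree of $K_n-E(L)$). Deleting edges keeps $L$ $H$-free, and as $\Delta(L)=o(n)$ we still have $\delta(K_n-E(L))\ge(1-o(1))n$; hence by the decomposition theorems of Keevash~\cite{keevash-2014} and of Glock, K\"uhn, Lo and Osthus~\cite{GKLO-2016}, $K_n-E(L)$ has an $H$-decomposition $\cP$, whose uncovered set $L$ is $H$-free, so $\cP$ is maximal. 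Counting gives $|\cP|=\frac{||K_n||-\ex(n,H)}{||H||}+O(n)$. When $H$ is a forest, the extremal $H$-free graphs are sparse and well structured (bounded maximum degree, with $\ex(n,H)-||L||$ controllable to within $O(1)$), so the divisibility correction costs only $O(1)$ edges and the error term becomes $O(1)$.

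\emph{Main obstacle.} The crux is the divisibility step: forcing $K_n-E(L)$ to be $H$-divisible via a merely linear (resp.\ constant) perturbation of the extremal configuration, while keeping $L$ $H$-free and $\delta(K_n-E(L))$ above the threshold required by the decomposition theorems. This is routine for $H$ complete (balanced Tur\'an parts, plus a constant-size adjustment of each clique's order to invoke Wilson's theorem), but for general bipartite $H$ the extremal graphs are not explicitly understood, so one must (i) produce a near-extremal $H$-free graph of maximum degree $o(n)$ in the abstract, and (ii) carry out the per-vertex degree-residue correction with only $O(n)$ edge deletions; securing a bounded-degree near-extremal graph and pinning down $\ex(n,H)$ to within $O(1)$ in the forest case are the other points that need care.
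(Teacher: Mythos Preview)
Your plan for $\chi(H)\ge 3$ matches the paper's argument. The genuine gap is in the bipartite case, and it is fatal for forests: your assumption that a near-extremal $H$-free graph of maximum degree $o(n)$ exists is false in general. For $H=tK_2$ the Erd\H{o}s--Gallai extremal graph is $K_{t-1}$ joined to $n-t+1$ isolated vertices, of maximum degree $n-1$, and in fact \emph{every} $H$-free graph with $\ex(n,H)-O(1)$ edges has a vertex of linear degree (already for $t=2$ the only $2K_2$-free graphs with $\Theta(n)$ edges are stars). Hence $K_n-E(L)$ has a vertex of degree $O(1)$ and no decomposition theorem applies; your claim that extremal forest-free graphs have bounded maximum degree is simply wrong. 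The paper does not assume this. It shows instead that any extremal $H$-free graph has at most $8k$ vertices of degree $\ge n/4$ (where $k=|V(H)|$), covers all complement-edges from these few high-degree vertices into the rest by a tailor-made tree-embedding argument (Lemma~\ref{tree-packing}), and packs the remaining high-minimum-degree part using a tree-decomposition result of~\cite{yuster-2000} that leaves fewer than $||H||$ edges; this is what produces the $O(1)$ error term.

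For bipartite $H$ containing a cycle your outline is closer to workable, since K\H{o}v\'ari--S\'os--Tur\'an bounds the number of high-degree vertices in an extremal graph by a constant, and discarding their $O(n)$ incident edges is $o(\ex(n,H))$ in this regime. But your divisibility-by-deletion step is still shaky: a vertex that happens to be isolated in $L_0$ cannot have its complement-degree residue modulo $\gcd(H)$ fixed by deleting edges of $L_0$. The paper bypasses this entirely by proving a standalone lemma (Lemma~\ref{l:h-pack}) that any $n$-vertex graph of minimum degree $(1-\delta)n$ has an $H$-packing covering all but $O(n)$ edges, with no divisibility hypothesis; the mechanism is to \emph{adjoin} a bounded set $S$ of auxiliary vertices with carefully chosen adjacencies so that the enlarged graph becomes $H$-divisible, decompose that via \cite{gustavsson-1991,keevash-2014,BKLO-2016}, and then discard the $O(n)$ copies of $H$ touching $S$.
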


Note that by (\ref{e:1}), the same results hold for $\cov(G,H)$ instead of $\cl(G,H)$.
The proof of Theorem \ref{t:kn} appears in Section \ref{s:clique}.

The asymptotic expression  for $\ex(n, H)$, given by Erd\H{o}s-Stone Theorem  when $\chi(H)\geq 3$, allows us to obtain the asymptotic ratio between the sizes of clumsy and perfect packings.

\begin{corollary}
For a fixed $H$ with $\chi(H) \ge 3$,
$$
\lim_{n\rightarrow \infty} \frac{\cl(K_n, H)}{\pp(K_n, H)} = \frac{1}{\chi(H)-1}\;.
$$
\end{corollary}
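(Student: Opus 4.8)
The plan is to read the corollary off Theorem~\ref{t:kn} by inserting the Erd\H{o}s--Stone asymptotics for $\ex(n,H)$ together with an elementary two-sided estimate on $\pp(K_n,H)$. Write $h=||H||$. For the denominator, note $\pp(K_n,H)\le \binom{n}{2}/h$ trivially, while almost-perfect packings give the matching lower bound: choosing $m\le n$ with $n-m$ bounded by a constant depending only on $H$ so that the divisibility conditions of Wilson's theorem \cite{wilson-1975} hold, $K_m$ has a perfect $H$-packing, which is an $H$-packing of $K_n$ of size $\binom{m}{2}/h=\binom{n}{2}/h-O(n)$. Hence $\pp(K_n,H)=(1+o(1))\,\binom{n}{2}/h$.

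For the numerator, since $\chi(H)\ge 3$ the Erd\H{o}s--Stone theorem yields
$$
\ex(n,H)=\left(1-\frac{1}{\chi(H)-1}+o(1)\right)\binom{n}{2},
$$
so that
$$
||K_n||-\ex(n,H)=\binom{n}{2}-\ex(n,H)=\left(\frac{1}{\chi(H)-1}+o(1)\right)\binom{n}{2}.
$$
Because $\chi(H)\ge 3$, the constant $1-\frac{1}{\chi(H)-1}$ lies in $[\tfrac12,1)$, so both $\ex(n,H)$ and $\binom{n}{2}-\ex(n,H)$ are $\Theta(n^2)$; in particular the error term $o(\ex(n,H))$ in Theorem~\ref{t:kn} is $o(n^2)$. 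Plugging into Theorem~\ref{t:kn} therefore gives
$$
\cl(K_n,H)=\frac{||K_n||-\ex(n,H)}{h}+o(n^2)=\left(\frac{1}{\chi(H)-1}+o(1)\right)\frac{1}{h}\binom{n}{2}.
$$
Dividing the two asymptotics, the common factor $\frac{1}{h}\binom{n}{2}$ and the $(1+o(1))$ factors cancel, leaving $\cl(K_n,H)/\pp(K_n,H)\to \frac{1}{\chi(H)-1}$, which is the claim.

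There is essentially no deep obstacle here: the whole argument is bookkeeping on top of Theorem~\ref{t:kn}. The one point that genuinely requires care is verifying that the $o(\ex(n,H))$ term from Theorem~\ref{t:kn} is indeed $o(n^2)$ and hence negligible against $\binom{n}{2}-\ex(n,H)=\Theta(n^2)$ --- this is exactly where the hypothesis $\chi(H)\ge 3$ is used, since for bipartite $H$ one would have $\binom{n}{2}-\ex(n,H)=\binom{n}{2}-o(n^2)$ but $\ex(n,H)=o(n^2)$, and the ratio degenerates (consistently with the limit value $\frac{1}{\chi(H)-1}=1$ one would formally plug in). A secondary, purely routine point is justifying the near-perfect packing of $K_n$; this is immediate from Wilson's theorem as cited in the introduction, so the corollary does not rely on anything beyond what is already available.
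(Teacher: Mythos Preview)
Your proof is correct and follows the same approach implicit in the paper: the corollary is stated there without a formal proof, but the paper indicates (before the corollary and again after Lemma~\ref{l:2}) that it follows by combining Theorem~\ref{t:kn} with the Erd\H{o}s--Stone asymptotics for $\ex(n,H)$, exactly as you do. Your added justification that $\pp(K_n,H)=(1+o(1))\binom{n}{2}/||H||$ via Wilson's theorem is the natural way to handle the denominator and matches the tools already invoked in the paper.
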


Our next result concerns another well-studied generic graph, the hypercube $Q_n$. In this setting, it is most natural to
evaluate $\cl(Q_n,Q_d)$ where $d \ge 2$ is fixed. Unlike the case of Theorem \ref{t:kn} where the true asymptotic is determined
and the main goal is to keep the set of uncovered edges (which is small in some cases) as large as possible,
in the hypercube setting we are only able to obtain upper and lower bounds and these do not coincide.
\begin{theorem}\label{t:qn}
For a fixed integer $d$, $d\geq 2$,  
$$
\Omega \left(\frac{\log d}{d~ 2^d}\right)  \le
\liminf_{n \rightarrow \infty}\frac{||Q_d||}{||Q_n||} \cl(Q_n, Q_d)  \le
\limsup_{n \rightarrow \infty}\frac{||Q_d||}{||Q_n||} \cl(Q_n, Q_d) \le
\frac{\sqrt{2\pi}}{\sqrt{d}}(1+o_d(1))
$$
and for $d=2$, 
$$
0.3932 \le
\liminf_{n \rightarrow \infty}\frac{||Q_2||}{||Q_n||} \cl(Q_n, Q_2)  \le
\limsup_{n \rightarrow \infty}\frac{||Q_2||}{||Q_n||} \cl(Q_n, Q_2) \le
\frac{2}{3}\;.
$$
\end{theorem}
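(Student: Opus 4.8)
Both lower bounds will follow from the trivial estimate \eqref{e:1} applied with $G=Q_n$ and $H=Q_d$. Using $||Q_n||=n2^{n-1}$ and $||Q_d||=d2^{d-1}$ one has
$$
\frac{||Q_d||}{||Q_n||}\cdot\frac{||Q_n||-\ex(Q_n,Q_d)}{||Q_d||}=1-\frac{\ex(Q_n,Q_d)}{||Q_n||},
$$
so the entire task for the lower bounds is to bound the extremal density $\ex(Q_n,Q_d)/||Q_n||$ away from $1$; since \eqref{e:1} also bounds $\cov$, the resulting estimates transfer to $\cov(Q_n,Q_d)$. For $d=2$, note $Q_2\cong C_4$, hence $\ex(Q_n,Q_2)=\ex(Q_n,C_4)$, and plug in the best known bound $\ex(Q_n,C_4)\le(0.6068+o(1))||Q_n||$ (Balogh, Hu, Lidick\'y and Liu) to obtain $1-\ex(Q_n,Q_2)/||Q_n||\ge 0.3932-o(1)$, which is the $d=2$ $\liminf$ bound.

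For general $d$ I need $\ex(Q_n,Q_d)\le\bigl(1-\Omega(\tfrac{\log d}{d\,2^d})\bigr)||Q_n||$, i.e.\ a $Q_d$-free subgraph must be missing an $\Omega(\tfrac{\log d}{d\,2^d})$-fraction of the edges. I would argue via the complement: if $G\subseteq Q_n$ is $Q_d$-free then $B:=E(Q_n)\setminus E(G)$ is a transversal of the family of (axis-aligned) $d$-dimensional subcubes. A first-moment count — each edge lies in $\binom{n-1}{d-1}$ such subcubes, and there are $\binom{n}{d}2^{n-d}$ of them — already gives $|B|\ge\tfrac{n}{d}2^{n-d}$, a density loss of $\Theta(1/(d2^d))$. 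To gain the extra $\log d$ factor I would use that, for each fixed set $S$ of $d$ free directions, the edges of $B$ of direction in $S$ must \emph{by themselves} meet all $2^{n-d}$ pairwise edge-disjoint subcubes with free coordinate set $S$, and average this stronger per-$S$ constraint over the $\binom{n}{d}$ choices of $S$ (each edge being useful only for the $\binom{n-1}{d-1}$ sets $S$ containing its direction), exploiting the slack created by the overlap pattern of different sets $S$. Extracting the logarithmic improvement over the naive first-moment bound is the main technical obstacle on the lower-bound side.

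For the upper bounds I would exhibit, for every $n$, an explicit edge-disjoint family $\mathcal P$ of copies of $Q_d$ in $Q_n$ such that $Q_n$ with the edges of $\mathcal P$ removed is $Q_d$-free; any such $\mathcal P$ is automatically a maximal packing, and $\tfrac{||Q_d||}{||Q_n||}\cl(Q_n,Q_d)\le\tfrac{||Q_d||}{||Q_n||}|\mathcal P|$ equals the fraction of $E(Q_n)$ covered by $\mathcal P$, so it suffices to keep that fraction at most the stated value. A natural guide is to layer the edges of $Q_n$ by the Hamming weight of the lower endpoint, observe that every $d$-dimensional subcube occupies exactly $d$ consecutive layers, and place the copies of $Q_d$ periodically with respect to these layers so that every subcube is met while $E(Q_n)\setminus\bigcup\mathcal P$ stays $Q_d$-free. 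For $d=2$ a period-three version of this scheme can be arranged to cover exactly a $\tfrac23$-fraction of the edges, giving $\limsup\le\tfrac23$; for general $d$ one has to balance the period of the pattern against the width of the layer-window that a copy of $Q_d$ unavoidably spans, and a Stirling-type optimization (responsible for the $\sqrt d$) yields covered fraction $\tfrac{\sqrt{2\pi}}{\sqrt d}(1+o_d(1))$. The genuinely delicate point in both cases is the joint constraint that the covered set be \emph{exactly} a disjoint union of copies of $Q_d$ while its complement contains \emph{no} copy of $Q_d$ at all — not merely no axis-aligned subcube — and producing an explicit family meeting both with the claimed density is where the real work lies.
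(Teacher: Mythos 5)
Your $d=2$ lower bound is fine and matches the paper (the paper plugs Baber's bound $\ex(Q_n,Q_2)\le 0.6068\,||Q_n||(1+o(1))$ into (\ref{e:1}); citing the later Balogh--Hu--Lidick\'y--Liu improvement works equally well). But for general $d$ your lower bound has a real gap: the averaging you describe over the direction-sets $S$ gives nothing beyond the first-moment count. For a fixed $S$, every edge whose direction lies in $S$ belongs to exactly one of the $2^{n-d}$ subcubes with free set $S$, so the ``per-$S$'' constraint is just $\sum_{i\in S}|B_i|\ge 2^{n-d}$, and summing over all $S$ and dividing by $\binom{n-1}{d-1}$ reproduces $|B|\ge \frac{n}{d}2^{n-d}$, i.e.\ density $\Theta(1/(d2^d))$ --- the $\log d$ factor does not come out of this scheme, as you yourself concede. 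That logarithmic gain is a genuinely nontrivial theorem of Alon, Krech and Szab\'o, which the paper simply invokes as (\ref{e:alon}) and combines with the observation that the edge set of any maximal (or covering) family of copies is a transversal of the $Q_d$'s, giving $\cl(Q_n,Q_d)\ge\cov(Q_n,Q_d)\ge \Omega\bigl(\tfrac{\log d}{d2^d}\bigr)\tfrac{||Q_n||}{||Q_d||}$. Without either proving that theorem or citing it, your general-$d$ lower bound is unproved.

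The upper bound is also missing its key ingredient, and moreover you impose a requirement that is both stronger than necessary and probably unattainable at the stated density: you ask that the uncovered edges contain \emph{no} copy of $Q_d$ at all. (Incidentally, the distinction you worry about is vacuous: every subgraph of $Q_n$ isomorphic to $Q_d$ is an axis-aligned subcube, which is what makes the count in Lemma \ref{l:count} correct.) The paper does not make the complement $Q_d$-free. It builds a packing $\cP'$ that covers \emph{almost} all edges of every $d$-th middle layer --- for $d=2$ via the Alon--Kim--Spencer near-perfect matching theorem applied to the $4$-uniform hypergraph of $C_4$'s inside two consecutive layers (Lemma \ref{l:claim1}), and for $d\ge 3$ via Frankl--R\"odl after regularizing degrees with Bollob\'as' theorem (Lemma \ref{l:claim2}) --- and then greedily augments $\cP'$ to a maximal packing $\cP$. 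Maximality is bought cheaply: since each $Q_d$ meets $d$ consecutive layers, every copy avoiding $\cP'$ must use an edge of the small exceptional set $F\cup(E(Q_n)\setminus M)$ of size $o(||Q_n||)$, and edge-disjointness bounds $|\cP\setminus\cP'|$ by that size. Your plan instead needs an \emph{exact} decomposition of the designated layers into copies of $Q_d$ (already for $d=2$, a perfect $C_4$-decomposition of two consecutive layers need not exist), and you give no argument --- not even the near-perfect version --- for why the layers can be covered at all, nor do you identify the quantity that actually produces the $\sqrt{d}$, namely the number $m_d=\binom{d}{(d-1)/2}\frac{d-1}{2}$ of middle-layer edges of $Q_d$. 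As it stands, both the general-$d$ lower bound and both upper bounds are plans whose central steps are acknowledged but not carried out.
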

It is worth noting that Offner \cite{offner-2009} proved that $\frac{||H||}{||Q_n||} \pp(Q_n, H) = 1-o(1)$
for any fixed subgraph $H$ of a hypercube, so the ratios in Theorem \ref{t:qn} also serve as ratios between $\pp(Q_n, Q_d)$ and $\cl(Q_n, Q_d)$.
The proof of Theorem \ref{t:qn} appears in Section \ref{s:cube}.

Our third main result consists of constructions of clumsy packings of grid graphs corresponding to the regular tessellations of the plane.
There are only three regular tessellations of the plane: the triangular, the square, and the hexagonal, each corresponding to an infinite graph
whose vertices correspond to respective $k$-gons ($k \in \{3,4,6\}$), and edges correspond to pairs of $k$-gons sharing sides.
We construct clumsy $C_k$-packings for each of these graphs and compute the exact limit ratio of the covered edges in these packings
which turns out to be $2/(k+1)$. In particular,  for a square grid graph $Gr_n$, our results here imply that $\cl({Gr}_n,C_4) = \frac{n^2}{5}(1+o(1))$.
The main result here, stated as Theorem \ref{t:rk}, appears in Section \ref{s:tiling}.

Finally, it is interesting to point out that $\cl(G,H)$ is {\em not} a monotone graph parameter, which might explain some of the difficulties that arise in its
determination. Indeed, consider the graph $G'$ which is the union of $k+1$ graphs $F_0, \ldots, F_k$, each isomorphic to $C_k$ such that 
$F_i$ is edge-disjoint from $F_j$, $1\leq i<j\leq k$, and $F_0$ shares its $i^{\rm th}$ edge with $F_i$, when we arbitrarily label the edges of $F_0$ with $1, \ldots, k$.
Let $H=C_k$ and let $G$ be obtained from $G'$ by deleting an edge from $F_0$. Then we see that $\cl(G',H)=1$ as $\{F_0\}$ forms a clumsy packing.
On the other hand $\cl(G, H)=k-1$.

\section{Clumsy packings in $K_n$}\label{s:clique}

Prior to  proving  Theorem \ref{t:kn} we need to recall some facts and definitions.
For a graph $H$, let $gcd(H)$ denote the greatest common divisor of its vertex degrees.
A graph $G$ is called {\em $H$-divisible} if $gcd(H)$ divides $gcd(G)$ and $||H||$ divides $||G||$.
For example, $K_n$ is $K_m$-divisible if $n \equiv 1 \bmod m(m-1)$.
Clearly, a necessary condition for a perfect $H$-packing of $G$ is that $G$ is $H$-divisible.
A seminal result of Wilson  \cite{wilson-1975} asserts that for every fixed graph $H$, if $n$ sufficiently large and $K_n$ is $H$-divisible, then $K_n$ has a perfect $H$-packing.
Recall also that the Tur\'an graph $T(n,k)$ is the complete $k$-partite graph on $n$ vertices whose parts form an equitable partition
(so the size of each part is either $\lfloor n/k \rfloor$ or $\lceil n/k \rceil$).
Tur\'an's Theorem asserts that $\ex(n,K_{k+1})=||T(n,k)||$ and that $T(n,k)$ is the unique extremal $K_{k+1}$-free graph with $n$ vertices.

The lower bound on $\cl(K_n,H)$ in Theorem \ref{t:kn} follows from (\ref{e:1}).
To prove the upper bound, we split the proof of Theorem \ref{t:kn} into four parts, depending on the structure of $H$.
The parts correspond to the cases $H=K_m$, $\chi(H) \ge 3$, $H$ is bipartite but not a forest, and $H$ is a forest. While the first two cases are rather standard, the proofs of the bipartite and forest cases are more involved, especially since we  know neither the structure nor the asymptotic value of $\ex(n,H)$ in these cases.

\subsection{$H=K_m$}

\begin{lemma}\label{l:1}
For fixed $m \ge 2$, $\cl(K_n, K_m) \le (||K_n|| - \ex(n, K_m))/||K_m||+O(n)$.
Moreover, for $n$ sufficiently large, $\cl(K_n, K_m)=  (||K_n|| - \ex(n,K_m)) / ||K_m||$ if $n$ is divisible by $m-1$ and $K_{n/(m-1)}$ is $K_m$-divisible.
\end{lemma}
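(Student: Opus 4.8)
The plan is to start from a Turán graph $T = T(n, m-1)$, which by Turán's Theorem is the unique $K_m$-free subgraph of $K_n$ with $\ex(n, K_m)$ edges, and to cover precisely the edges of $K_n \setminus T$ (the ``missing'' edges, those inside the parts of $T$) using edge-disjoint copies of $K_m$, using only $O(n)$ copies beyond the count-optimal number. The resulting packing $\cP$ will be maximal: any copy of $K_m$ in $K_n$ uses at least one edge not in $T$ (since $T$ is $K_m$-free), hence at least one edge covered by $\cP$, so $\cP$ cannot be extended. Thus $\cl(K_n, K_m) \le |\cP|$, and we just need $|\cP| \le (||K_n|| - \ex(n,K_m))/||K_m|| + O(n)$.

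First I would set things up cleanly. Partition $[n]$ into parts $V_1, \ldots, V_{m-1}$ of sizes $\lfloor n/(m-1)\rfloor$ or $\lceil n/(m-1)\rceil$; the edges to be covered are $\bigcup_i \binom{V_i}{2}$. If $n$ is divisible by $m-1$ all parts have size $s = n/(m-1)$; I will handle the general case by first absorbing the (at most $m-2$) ``extra'' vertices with $O(n)$ ad hoc copies of $K_m$ (each such vertex lies in $O(n)$ edges, and each edge can be stuck into some $K_m$ greedily as long as we have enough room, or simply covered one at a time by $O(1)$ copies of $K_m$ each — this only costs $O(n)$ total), reducing to the balanced case on parts of size $s$. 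In the balanced case, observe $\sum_i \binom{s}{2}$ edges must be covered. I would like to cover them by a near-perfect $K_m$-packing, i.e. realize the edge set $\bigcup_i \binom{V_i}{2}$ (together with a negligible ``error'' set of $O(n)$ edges of $K_n$) as a disjoint union of edge sets of copies of $K_m$; then $|\cP| = (\text{covered edges})/||K_m|| = (||K_n|| - \ex(n,K_m))/||K_m|| + O(n)$.

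The key device is a $K_m$-decomposition of $K_s$ crossed with an index. Concretely: if $K_s$ is $K_m$-divisible (which is exactly the stated hypothesis ``$K_{n/(m-1)}$ is $K_m$-divisible''), then by Wilson's Theorem, for $s$ large, $K_s$ has a perfect $K_m$-packing $\D$. Identify each $V_i$ with $[s]$ via a bijection $\phi_i$. For each copy $D \in \D$ and each $i$, the image $\phi_i(D)$ is a copy of $K_m$ inside $V_i$; taking all of these over all $i$ and all $D \in \D$ gives a perfect $K_m$-packing of $\bigcup_i \binom{V_i}{2}$ of size $(m-1)\cdot ||K_s||/||K_m|| = (||K_n|| - \ex(n, K_m))/||K_m||$ exactly, since $||K_n|| - ||T(n,m-1)|| = (m-1)\binom{s}{2}$ when the parts are balanced of size $s$. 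Combined with maximality this gives the ``Moreover'' clause $\cl(K_n, K_m) = (||K_n|| - \ex(n, K_m))/||K_m||$ (the reverse inequality is (\ref{e:1})). For the general upper bound with the $O(n)$ error term and without the divisibility hypothesis, I would instead add a bounded number of extra vertices or edges to each $\binom{V_i}{2}$ so that each becomes $K_m$-divisible — adjusting $s$ by an additive $O(1)$, which changes $\binom{s}{2}$ by $O(n)$ and the number of parts is $m-1 = O(1)$ — apply Wilson's Theorem to each adjusted clique, and absorb the leftover $O(n)$ edges by $O(n)$ further copies of $K_m$ covering them one by one.

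The main obstacle is the bookkeeping around divisibility: Wilson's Theorem only decomposes $K_s$ when $s$ satisfies the relevant congruences, so in the general (non-divisible) case I must argue that I can always perturb each within-part clique — by adding a constant number of vertices borrowed from other parts, or by pre-removing a constant number of edges and covering them separately — to reach a $K_m$-divisible size, while keeping the total overhead $O(n)$ and preserving maximality (the added copies of $K_m$ only help maximality, and the few edges covered ``separately'' are still covered). Checking that a single edge (or a small constant number of edges) of $K_n$ can always be covered by $O(1)$ edge-disjoint copies of $K_m$ without clashing with $\cP$ is routine for $n$ large, since each edge extends to many copies of $K_m$ and only $O(n)$ of them are touched. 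The balanced, $K_m$-divisible case, which yields the exact formula, is clean; everything else is padding controlled by $O(n)$.
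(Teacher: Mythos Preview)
Your approach is essentially the paper's: partition $K_n$ via the Tur\'an graph $T(n,m-1)$, apply Wilson's Theorem inside each part, and deduce maximality from the fact that $T(n,m-1)$ is $K_m$-free. For the exact (``Moreover'') clause your argument and the paper's coincide.

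For the general $O(n)$ bound the paper takes a slightly cleaner route than your two-stage perturbation: it simply passes to the largest $n' \le n$ with $n' \equiv m-1 \pmod{m(m-1)^2}$ (so $n-n'=O(1)$), applies the exact case to $K_{n'}$, and then greedily extends that packing to a maximal packing of $K_n$. The count is controlled because every $K_m$ added in this extension must contain an edge incident to one of the $n-n'$ leftover vertices, and there are at most $(n-n')n = O(n)$ such edges, so at most $O(n)$ copies are added. This sidesteps a small issue in your last paragraph: the assertion that for a given leftover edge ``only $O(n)$'' of the $K_m$'s through it are touched by $\cP$ is not correct as stated---after the main within-part packing a vertex already has $\Theta(n)$ covered incident edges, which can block a constant fraction (or even all) of the $K_m$'s through a specified edge. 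You never actually need to cover each leftover edge by an explicit $K_m$; extending greedily to maximality and bounding the number of added copies by the number of as-yet-uncovered intra-part edges (each added $K_m$ must use at least one, since $T(n,m-1)$ is $K_m$-free) is both simpler and avoids the difficulty.
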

\begin{proof}
We assume $m \ge 3$ as the case $m=2$ trivially holds.
We shall construct a maximal $H$-packing with the desired number of copies of $K_m$.
Assume first that $n$ is divisible by $m-1$ and $K_{n/(m-1)}$ is $K_m$-divisible.
Thus, given $G=K_n$, the Tur\'an graph $T(n,m-1)$ is a spanning subgraph of $G$. Denote the partite sets of  $T(n, m-1)$  by $V_1,\ldots, V_{m-1}$.
If $n$ is sufficiently large, we can use Wilson's Theorem to find a perfect $K_m$-packing of each $V_i$. The union of these $m-1$ perfect packings
is a $K_m$-packing of size $(||K_n|| - \ex(n, K_m))/||K_m||$, as required. It is a maximal packing since each copy of $K_m$ 
in $K_n$ contains an edge induced by one of $V_i$'s.

Next assume that $n$ is not of the aforementioned form. Let $n' < n$ be the largest integer such that $m-1$ divides $n'$ and $K_{n'/(m-1)}$ is $K_m$-divisible.
For example, every $n'$ of the form $n' \equiv m-1 \bmod m(m-1)^2$ satisfies this conditions. Hence $n-n' < m(m-1)^2$.
Now let $G=K_n$ and let $G'=K_{n'}$ be a subgraph of $G$.
By the previous paragraph, if $n$ is sufficiently large (and thus $n'$ is sufficiently large) there is a maximal $K_m$-packing $\cP$ of $G'$ of size
$(||K_{n'}|| - \ex(n', K_m))/||K_m|| \le (||K_n|| - \ex(n, K_m))/||K_m||$.
However, there may now be $K_m$ copies of $G$ that are not covered by $\cP$. Each such copy must therefore contain an edge incident to one of the $n-n'$ vertices of
$V(G) \setminus V(G')$. As there are at most $(n-n')n < m(m-1)^2n$ such edges, one can greedily add edge-disjoint copies of $K_m$ to $\cP$ to obtain a maximal packing of $G$ consisting of less than
$|\cP|+ m(m-1)^2n$ elements which is less than $(||K_n|| - \ex(n, K_m))/||K_m|| + m(m-1)^2n$.

One can improve the error term $m(m-1)^2n$ to a better linear term using a result of Caro and Yuster \cite{CY-1997}
asserting that for any sufficiently large $\ell$ and a fixed $m$, $K_\ell$ contains at least 
$$
\left\lfloor \frac{\ell}{m}\left\lfloor \frac{\ell-1}{m-1}\right\rfloor \right\rfloor -1
$$
pairwise edge-disjoint copies of $K_m$.
Thus there is a packing of $K_m$'s in $K_\ell$ covering at least 
$$
\frac{m(m-1)}{2} \left( \left\lfloor \frac{\ell}{m}\left\lfloor \frac{\ell-1}{m-1}\right\rfloor \right\rfloor -1 \right) \geq \frac{\ell(\ell-1)}{2}  - \frac{(m-2)\ell}{2} -  m(m-1)
$$
edges of $K_\ell$ where the latter inequality follows from 
\begin{eqnarray*}
\left( \left\lfloor \frac{\ell}{m}\left\lfloor \frac{\ell-1}{m-1}\right\rfloor \right\rfloor -1 \right) & \geq &  \frac{1}{m} \left(\ell \cdot\frac{\ell-m+1}{m-1}-m\right)-1\\
&=& \frac{1}{m} \left(\ell \cdot\frac{(\ell-1)+ (-m+2)}{m-1}-m\right)-1 \\
&=& \frac{\ell (\ell-1)}{m(m-1)} + \frac {\ell (-m+2)}{m(m-1)}-2\;.
\end{eqnarray*}

Let $G=K_n$ and $V(G) = V_1\cup \cdots \cup V_{(m-1)}$, where the parts form an equitable partition. Let $|V_i| = \ell_i$, $i=1, \ldots, m-1$.
Let $\cP_i$ be a densest $K_m$-packing of $G[V_i]$ with $K_m$ for $i=1, \ldots, m-1$.
Let $\cP= \cP_1\cup \cdots \cup \cP_{m-1}$.
We see that the set of edges covered by $\cP$ is included in the set of edges of the complement of the complete $(m-1)$-partite graph with parts $V_1, \ldots, V_{m-1}$, i.e., the Tur\'an graph $T(n,m-1)$. Thus $|\cP| \leq (||K_n|| - \ex (n, K_m))/||K_m||$.
Let the set  of edges in $G[V_i]$'s that are not covered by $\cP$ be denoted $E'$.  Then  
$$
|E'|\leq \sum_{i=1}^{(m-1)} \frac{\ell_i(\ell_i-1)}{2} - \left(\frac{\ell_i(\ell_i-1)}{2}  - \frac{(m-2)\ell_i}{2} -  m(m-1)\right)=
\frac{(m-2)n}{2} + m(m-1)^2\;.
$$
Thus we can greedily extend $\cP$ to a maximal $K_m$-packing of $G$ by adding at most $|E'|$ elements to cover each edge of $E'$ when possible. The resulting maximal packing will have size at most
$$
|\cP| + \frac{(m-2)n}{2} + m(m-1)^2  \leq  \frac{||K_n|| - \ex (K_n, K_m)}{||K_m||} + \frac{(m-2)n}{2} + m(m-1)^2\;.
$$
\end{proof}
It is worth noting that Lemma \ref{l:1} implies that for a fixed $m$,
$$
\lim_{n\rightarrow \infty} \frac{\cl(K_n, K_m)}{\pp(K_n, K_m)} = \frac{1}{m-1}\;.
$$

\subsection{$\chi(H) \ge 3$}

\begin{lemma}\label{l:2}
Let $H$ be a fixed graph with $\chi(H)=3$. Then, $\cl(K_n,H) \le (||K_n|| - \ex(n,H))/||H||+o(n^2)$. In particular, $\cl(K_n,H)\leq  (||K_n|| - \ex(n,H))/||H||+o(\ex(n,H))$.
\end{lemma}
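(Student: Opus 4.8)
The plan is to mimic the structure of the proof of Lemma \ref{l:1}: find a large $H$-free spanning subgraph $F$ of $K_n$ whose complementary edge set can be covered efficiently by edge-disjoint copies of $H$, then clean up the (relatively few) remaining copies of $H$ greedily. Since $\chi(H)=3$, the natural candidate for $F$ is the Tur\'an graph $T(n,2)$, i.e.\ the complete bipartite graph $K_{\lceil n/2\rceil, \lfloor n/2\rfloor}$, which is $H$-free because $H$ is not bipartite. The edges \emph{not} in $F$ are exactly those inside the two parts $V_1, V_2$, so $|E(K_n)\setminus E(F)| = ||K_n|| - \ex(n,H) + o(n^2)$ (the $o(n^2)$ absorbing the difference between $\ex(n,H)$ and $||T(n,2)||$, which is itself $o(n^2)$ by Erd\H{o}s--Stone since $\chi(H)=3$). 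It therefore suffices to produce a \emph{maximal} $H$-packing of $K_n$ whose copies of $H$ together cover all edges inside $V_1$ and inside $V_2$, using only $(1+o(1))\big(||K_n||-\ex(n,H)\big)/||H||$ copies.

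First I would handle the edges inside each part. The graph $G[V_i]$ is a clique on roughly $n/2$ vertices; by Wilson's theorem (or, to get an explicit $o(n^2)$ error without divisibility assumptions, the Caro--Yuster packing result quoted in Lemma \ref{l:1} applied to a clique containing $H$), $K_m$ with $m=|V(H)|$ decomposes into, or near-decomposes into, copies of $H$ once $m \mid \binom{?}{2}$ and the degree-divisibility holds; in general one gets an $H$-packing of $G[V_i]$ covering all but $o(n^2)$ of its edges using $(1+o(1))\binom{|V_i|}{2}/||H||$ copies. Taking the union over $i=1,2$ gives an $H$-packing $\cP_0$ covering all but $o(n^2)$ edges inside the parts, with $|\cP_0| \le (1+o(1))\big(\binom{|V_1|}{2}+\binom{|V_2|}{2}\big)/||H|| = (1+o(1))\big(||K_n||-\ex(n,H)\big)/||H||$. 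The copies in $\cP_0$ may use some crossing edges of $F$ as well, but that is harmless.

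Next, $\cP_0$ need not be maximal: there may still be copies of $H$ in $K_n$ none of whose edges is covered. Every such copy must avoid all covered edges, in particular it must lie within $F$ together with the $o(n^2)$ uncovered in-part edges; since $F$ is $H$-free, each uncovered copy of $H$ uses at least one of these $o(n^2)$ leftover edges. Hence one can greedily extend $\cP_0$ to a maximal packing by adding, for each leftover edge that still lies in some uncovered copy of $H$, one new copy of $H$ through it (and a bounded number of auxiliary edges) — each addition can touch at most a constant times $n$ other leftover edges, but crudely we add at most $o(n^2)$ copies. This yields a maximal $H$-packing of size $(1+o(1))\big(||K_n||-\ex(n,H)\big)/||H|| + o(n^2)$, and since $||K_n||-\ex(n,H) = \Theta(n^2)$ when $\chi(H)=3$, the claimed bound $\cl(K_n,H)\le \big(||K_n||-\ex(n,H)\big)/||H|| + o(n^2) = \big(||K_n||-\ex(n,H)\big)/||H|| + o(\ex(n,H))$ follows.

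The main obstacle is the cleanup step: ensuring that the greedily-added copies of $H$ do not themselves force the error term above $o(n^2)$, and that the leftover in-part edges can indeed be covered by copies of $H$ inside $K_n$ without interfering with $\cP_0$. The cleanest route is to be slightly wasteful when building $\cP_0$ — decompose $G[V_i]$ into copies of $H$ up to a genuinely negligible remainder and note each remaining copy of $H$ in $K_n$ meets that remainder — so that the number of greedy additions is bounded by the size of the remainder, which we may take to be $o(n^2)$; controlling this remainder is exactly what Wilson/Caro--Yuster gives us. A secondary subtlety is that $\chi(H)$ could in principle be larger than $3$; the lemma as stated restricts to $\chi(H)=3$, but the same argument with $T(n,\chi(H)-1)$ in place of $T(n,2)$ covers all $\chi(H)\ge 3$, which is presumably the intended generality.
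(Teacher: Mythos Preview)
Your proposal is correct and follows essentially the same route as the paper's proof: take the Tur\'an graph $T(n,\chi(H)-1)$ as the $H$-free spanning subgraph, use Wilson's theorem to $H$-pack each of the cliques on the parts up to a small remainder, and then greedily extend to a maximal packing by adding one copy of $H$ per leftover edge. Two minor remarks: the copies of $H$ you place inside $G[V_i]$ never use crossing edges (they live entirely in $V_i$), so that worry is unnecessary; and the Caro--Yuster result you cite concerns $K_m$-packings, not $H$-packings, so the clean justification for the near-decomposition of each $G[V_i]$ is Wilson's theorem applied after a bounded divisibility adjustment, which in fact leaves only $O(n)$ uncovered edges rather than merely $o(n^2)$.
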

\begin{proof}
Let $\chi(H) = r \ge 3$.
The Erd\H{o}s-Stone Theorem implies that $\ex(n,H) = ||T(n,r-1)||(1+o(1))$.
Let $G=K_n$ and let $G'=T(n,r-1)$ be a spanning subgraph of $G$. As in Lemma \ref{l:1}, we use Wilson's Theorem to find an $H$-packing $\cP$ of
the complement of $G'$ in $G$ (i.e. the vertex-disjoint cliques induced by the $r-1$ parts) which cover all but $O(n)$ edges of this complement.
Let $E^*$ denote the uncovered edges of the complement.
Notice that since $G'$ contains no copy of $H$, any copy of $H$ consisting only of edges of $G$ that not covered by $\cP$ must contain an edge from $E^*$.
We can thus extend $\cP$ to a maximal $H$-packing of $G$ using at most $|E^*|=O(n)$ additional elements.
The size of this maximal $H$-packing is therefore at most
$$
\frac{||K_n||- ||T(n,r-1)||}{||H||} + O(n) \le \frac{||K_n||- \ex(n,H)||}{||H||} + o(n^2)\;.
$$
\end{proof}
It is worth noting that Lemma \ref{l:2} implies that for a fixed $H$ with $\chi(H) \ge 3$,
$$
\lim_{n\rightarrow \infty} \frac{\cl(K_n, H)}{\pp(K_n, H)} = \frac{1}{\chi(H)-1}\;.
$$

\subsection{$\chi(H) =2$ and $H$ is not a forest}

For this case we prove the following lemma which, in turn, relies on several breakthrough results \cite{BKLO-2016,gustavsson-1991,keevash-2014} that imply that
for any fixed graph $H$, an $H$-divisible graph with sufficiently many vertices and sufficiently large minimum degree has a perfect $H$-packing.
\begin{lemma}\label{l:h-pack}
Let $H$ be a fixed graph. Then there exist $\delta=\delta(H) > 0$, $C=C(H)$, and  $N=N(H)$ such that the following holds.
If $G$ is a graph with $n > N$ vertices and minimum degree at least $(1-\delta)n$, then $G$ has an $H$-packing which covers all but at most
$Cn$ edges of $G$.
\end{lemma}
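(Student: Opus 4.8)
The plan is to reduce the statement to the known breakthrough results on $H$-decompositions of graphs with large minimum degree (\cite{BKLO-2016,gustavsson-1991,keevash-2014}), by first passing to an $H$-divisible spanning subgraph obtained from $G$ by deleting a bounded number of edges. Let $h=|V(H)|$, and let $q=q(H)$ be a common value such that being $H$-divisible is guaranteed once the two divisibility conditions ($\gcd(H)\mid\gcd(G')$ and $\|H\|\mid\|G'\|$) are satisfied; both conditions depend only on residues of the degrees and of the edge count modulo quantities depending solely on $H$. The breakthrough results supply constants $\delta_0=\delta_0(H)>0$ and $N_0=N_0(H)$ so that every $H$-divisible graph on $n>N_0$ vertices with minimum degree at least $(1-\delta_0)n$ has a perfect $H$-packing. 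I will take $\delta=\delta_0/2$ and $N$ large (to be fixed at the end), and $C=C(H)$ a constant absorbing the number of edges removed in the divisibility-fixing step.

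First I would fix the degree residues. Starting from $G$ with $\delta(G)\ge(1-\delta)n$, I greedily remove a small number of edges so that every vertex degree becomes congruent to $0$ modulo $\gcd(H)$ (or, more carefully, so that all degrees are congruent to each other modulo $\gcd(H)$ and the common residue is one that $\gcd(H)$-divisibility permits — the standard trick is to make all degrees have the same parity / same residue, which one can do by removing a near-perfect matching on the ``bad'' vertices, or short paths/edges among them). Each vertex participates in $O(1)$ such removals since $\gcd(H)$ is a constant, so only $O(n)$ edges are deleted and the minimum degree drops by only $O(1)$. Next I would fix the edge-count residue: remove a bounded number — at most $\|H\|-1 = O(1)$ — of further edges to make $\|G'\|\equiv 0\pmod{\|H\|}$, taking care (by choosing these edges to form, say, a short path or a sparse set) not to disturb the degree congruences, or re-applying the degree fix locally afterward; in any case this costs only $O(1)$ additional edges and $O(1)$ in minimum degree. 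Call the resulting graph $G'$; it is $H$-divisible, has $n$ vertices, and minimum degree at least $(1-\delta)n - O(1) \ge (1-\delta_0)n$ once $n$ is large enough.

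Now I apply the decomposition result to $G'$ to obtain a perfect $H$-packing $\cP$ of $G'$. Viewing $\cP$ as an $H$-packing of $G$, the only edges of $G$ it fails to cover are precisely the $O(n)$ edges deleted in the divisibility-fixing step. Setting $C=C(H)$ to be the implied constant in that $O(n)$ bound, $\cP$ covers all but at most $Cn$ edges of $G$, as required; one chooses $N=N(H)$ large enough that $n>N$ makes both $n>N_0$ and the minimum-degree slack $\delta_0 n/2 - O(1) \ge 0$ hold.

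The main obstacle is the bookkeeping in the divisibility-fixing step: one must delete edges to correct \emph{both} the individual degree residues modulo $\gcd(H)$ and the global edge-count residue modulo $\|H\|$ \emph{simultaneously}, without letting the minimum degree fall out of the range where the decomposition theorems apply, and without creating new degree violations when fixing the edge count. This is entirely standard in the decomposition literature (it is exactly the reduction used to derive ``approximate decomposition'' statements from exact ones), and since $\gcd(H)$ and $\|H\|$ are constants depending only on $H$, each vertex is touched $O(1)$ times and the total number of removed edges is $O(n)$ — but it is the one place where a little care is genuinely needed rather than a direct citation.
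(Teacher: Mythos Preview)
Your approach is correct in outline, and the divisibility-fixing by edge deletion you sketch is indeed standard in the decomposition literature (see e.g.\ the reductions in \cite{BKLO-2016}); with a little care one can delete a subgraph of maximum degree $O_H(1)$ so that the remaining graph is $H$-divisible, and then apply the exact decomposition theorem. The one point you flag---fixing the edge-count residue modulo $\|H\|$ without spoiling the degree residues modulo $\gcd(H)$---is genuinely the delicate step, but it is handled by deleting a bounded-size subgraph all of whose degrees are themselves divisible by $\gcd(H)$ (rather than $\|H\|-1$ arbitrary edges), so your acknowledgement of this obstacle is appropriate.

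The paper, however, takes a different route: instead of \emph{deleting} edges from $G$ to reach an $H$-divisible subgraph, it \emph{adds} a constant number $s=s(H)$ of new vertices $S$, joins them completely to $V(G)$ and to each other, and then removes a few of the new $V$--$S$ edges (and some matchings inside $S$) so that the enlarged graph $G^*$ on $n+s$ vertices has every degree divisible by $2\cdot\gcd(H)\cdot\|H\|$ and hence is $H$-divisible. One then decomposes $G^*$ perfectly and discards the at most $s(n+s)$ copies of $H$ that touch $S$, leaving an $H$-packing of $G$ missing $O(n)$ edges. The advantage of the paper's ``add vertices'' method is that the bookkeeping is entirely explicit and self-contained: one never has to argue that a suitable bounded-degree correcting subgraph sits inside $G$, because the correction lives on the new edges to $S$, where one has complete freedom. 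Your ``delete edges'' method is arguably more natural and equally valid, but leans on a folklore lemma that you would need to either prove or cite precisely.
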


\begin{proof}
By any one of the results \cite{BKLO-2016,gustavsson-1991,keevash-2014}, for every fixed $H$ there exist $\epsilon=\epsilon(H) > 0$ and $N_1=N_1(H)$
such that any $H$-divisible graph $G$ with $n > N_1$ vertices and minimum degree at least $(1-\epsilon)n$ has a perfect $H$-packing. We can assume that $\epsilon<1/3$. 

Let $\delta= \epsilon/3$ and for notational convenience, let $gcd(H)=r$ and $||H||=h$.
Let $s$ be the smallest even integer larger than $6rh/\epsilon$. Let $N= \max\{N_1, \lceil 6s/\epsilon \rceil\}$.
Let $G$ be a graph with $n > N$ vertices and minimum degree $(1-\delta)n$. If $G$ were $H$-divisible, we would be done as $G$ would have a perfect
$H$-packing. Unfortunately, this might not be the case. 

Let $V=V(G)=\{v_1,\ldots,v_n\}$. Consider a set $S$ of new vertices, $V\cap S=\emptyset$, $|S|=s$, $s$ is even, $s> 6rh/\epsilon$.
We shall construct a new graph on a vertex set $V\cup S$ so that $V$ induces $G$ and so that the new graph is $H$-divisible. More specifically, we shall construct this new graph in such a way that all its vertex degrees are divisible by $2rh$. Then clearly each degree is divisible by $r$ and the number of edges is divisible by $h$.

We shall define a graph  $G'$  whose vertex set is $V \cup S$,  ~ $G'[V]=G$, ~ $G'[S]=K_s$, and  
 the adjacencies between $S$ and $V$ are defined by the following procedure.

We define these adjacencies in $n$ steps where initially before the first step, we take all $ns$ possible edges between $S$ and $V$ and in each step we delete a few of them.
Let $d_i$ denote the degree of $v$ in $G'$ before the first step (so $d_i=deg_G(v_i)+s$). Let $b_i \equiv {d_i} \pmod {2rh}$ so that $0 \le b_i < 2rh$.
In the first step we arbitrarily remove $b_1$ edges between $v_1$ and $S$. So, after this removal, the degree of $v_1$ becomes
$d_1-b_1 \equiv 0 \pmod {2r}$, and some vertices of $S$ (that is, precisely $b_1$ vertices of $S$) have degree  equal to $n+s-2$ while the other
$s-b_1$ vertices of $S$ have degree equal to $n+s-1$. In a general step $i$, we remove $b_i$ edges between $v_i$  and $S$
so that the $b_i$ endpoints of these edges in $S$ are the ones that presently have the highest degree.
Notice that at any point in this process, the degrees  of any two vertices of $S$ differ by at most $1$.
After this process ends, the resulting $G'$ has the following property. The degree of each $v_i\in V$ in $G'$ is $d_i-b_i \equiv 0 \pmod {2rh}$,
and for some $q$, each  vertex from   $S$ has degree either $q$ or $q+1$ in $G'$.

Let us next estimate $q$. The total number of edges removed in the aforementioned process is $\sum_{i=1}^n b_i < 2rhn$.
Thus, the number of non-neighbors of each vertex of $S$ is at most $\lceil 2rhn/s \rceil$ implying that $q \ge n+s-1-\lceil 2rhn/s \rceil$.
Let $S_1$ be the set of vertices of $S$ with degree $q+1$ in $G'$ and let $S_0$ be the set of vertices of $S$
with degree $q$ in $G'$. Recall that $|S_0|+|S_1|=s$ is even. We claim that both $|S_0|$ and $|S_1|$ are even. Assume otherwise, then both are odd.
But since all the degrees of all $v \in V$ in $G'$ are $0 \pmod {2rh}$ and in particular even, we have that $G'$ has an odd number of vertices with odd
degree, a contradiction. So, we have that both $S_1$ and $S_0$ are even. Take an arbitrary perfect matching in $G'[S_1]$ (recall, $G'[S]=K_s$ so this can be trivially
done) and remove it. Thus in the resulting new graph $G''$ all the degrees of the vertices of $S$ are precisely $q$ and we have not changed the degrees of
the other vertices. Now, suppose $q \equiv t \pmod {2rh}$ where $0 \le t < 2rh$. Take $t$ pairwise edge-disjoint perfect matchings of $S$ in $G''$
and remove them from $G''$ (this can easily be done greedily since after removing each perfect matching the minimum degree the subgraph induced by $S$ is larger than $s/2$
since $s > 4rh$). The resulting graph $G^*$ now has all of its degrees $0 \pmod {2rh}$, so $G^*$ is divisible by $H$.


Let us next estimate the minimum degree of $G^*$ which has $n+s$ vertices.
The degree in $G^*$ of every vertex $v_i \in V$ is $deg_G(v_i)+s-b_i \ge deg_G(v_i) \ge (1-\delta)n \ge (1-\epsilon/2)(n+s)$.
The degree of every vertex of $S$ in $G^*$ is $q-t \ge n+s-1-\lceil 2rhn/s \rceil - 2rh \ge (1-\epsilon/2)(n+s)$,
where we have used here that $s \ge 6rh/\epsilon$.
Therefore, $G^{*}$ has a perfect $H$-packing $\cP$. The elements of $\cP$ that are not entirely contained in $G$ are those that have an edge incident to $S$.  The number of such copies of $H$ is at most $s(n+s)$. 
Deleting the edges of these copies gives an $H$-packing of $G$ that covers all but at most $(n+s)s||H||  \le Cn$ edges for an appropriate constant $C$.
\end{proof}

\begin{lemma}\label{l:3}
Let $H$ be a bipartite graph that contains a cycle. Then, $\cl(K_n,H) \le (||K_n|| - \ex(n,H))/||H||+O(n)$.
In particular, $\cl(K_n,H)= (||K_n|| - \ex(n,H))/||H||+o(\ex(n,H))$.
\end{lemma}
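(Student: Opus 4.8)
The plan is to mimic the strategy of Lemma \ref{l:2}, but with the Tur\'an graph replaced by an extremal $H$-free graph and with Lemma \ref{l:h-pack} used in place of Wilson's Theorem (which no longer applies since the complement of an extremal graph need not be a disjoint union of cliques). Concretely, I would fix an $H$-free graph $F$ on $n$ vertices with $\ex(n,H)$ edges, set $G=K_n$, and let $\overbar{F}$ be the complement of $F$ inside $K_n$, so $||\overbar{F}|| = ||K_n||-\ex(n,H)$. The goal is to produce an $H$-packing of $\overbar{F}$ covering all but $O(n)$ of its edges; then, as in Lemma \ref{l:2}, any copy of $H$ in $K_n$ that avoids all covered edges must either lie entirely in $F$ (impossible, as $F$ is $H$-free) or use one of the $O(n)$ uncovered edges of $\overbar{F}$, so the packing can be greedily extended to a maximal one with only $O(n)$ extra copies. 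This gives $\cl(K_n,H) \le (||\overbar{F}|| + O(n))/||H|| = (||K_n||-\ex(n,H))/||H|| + O(n)$, and since $\ex(n,H) = \Omega(n^{1+c})$ for some $c>0$ when $H$ is bipartite containing a cycle (by, e.g., the Bondy--Simonovits/K\H{o}v\'ari--S\'os--Tur\'an type lower bounds, or simply because $\ex(n,H) \ge \ex(n,C_{2k}) = \Omega(n^{1+1/k})$ where $C_{2k}$ is the shortest even cycle in $H$), the $O(n)$ term is indeed $o(\ex(n,H))$, giving the "in particular" clause.

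The crux is therefore finding a near-perfect $H$-packing of $\overbar{F}$, and the obstacle is that $\overbar{F}$ need not have large minimum degree — Lemma \ref{l:h-pack} requires minimum degree at least $(1-\delta)n$, whereas $\overbar{F}$ could have isolated or low-degree vertices (e.g.\ if $F$ is close to a complete bipartite graph, vertices in the small side of $\overbar{F}$'s structure have low degree). To get around this, I would use the fact that a bipartite graph $H$ with a cycle has chromatic number $2$ but, more importantly, that $H \subseteq K_{a,a}$ for $a = |V(H)|$, so it suffices to find many edge-disjoint copies of $K_{a,a}$. The key structural observation is that $\overbar{F}$, having $\binom{n}{2} - o(n^2)$ edges, contains a huge "dense core": by a standard cleaning argument one can find a subset $W \subseteq V(K_n)$ with $|W| = n - o(n)$ such that the minimum degree of $\overbar{F}$ within $W$ is at least $(1-\delta)|W|$. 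Indeed, since $||F|| = \ex(n,H) = o(n^2)$, the number of vertices of $F$-degree exceeding $\delta n / 2$ is at most $4\ex(n,H)/(\delta n) = o(n)$; deleting these gives a set $W$ on which every vertex has $F$-degree at most $\delta n/2 \le \delta |W|$ (after a second tiny cleaning to account for $|W| < n$), hence $\overbar{F}[W]$-degree at least $(1-\delta)|W|$.

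Then I would apply Lemma \ref{l:h-pack} to $G^{\flat} := \overbar{F}[W]$ (for $n$ large enough that $|W| > N(H)$ and the min-degree bound holds with the $\delta$ from that lemma) to obtain an $H$-packing $\cP$ of $G^{\flat}$ covering all but at most $C|W| \le Cn$ of its edges. The edges of $\overbar{F}$ not covered by $\cP$ are: the $\le Cn$ uncovered edges inside $W$, plus all edges of $\overbar{F}$ incident to $V \setminus W$, of which there are at most $|V \setminus W| \cdot n = o(n) \cdot n = o(n^2)$ — and \emph{this is the one place the bound is not yet $O(n)$}. So I would be more careful with the cleaning: instead of one pass, iterate or choose the threshold so that $|V\setminus W|$ is bounded, which is impossible in general; the honest fix is that the edges incident to $V \setminus W$ lie in $\overbar F$ but I don't need to cover them optimally — I append them to the set of "leftover" edges and extend $\cP$ greedily over all leftover edges to a maximal packing of $K_n$. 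This adds at most (number of leftover edges) copies, which is $o(n^2)$, not $O(n)$. To recover the stated $O(n)$, I instead treat $V\setminus W$ differently: note $|V\setminus W| = o(n)$ vertices span at most $o(n)\cdot n$ edges of $\overbar F$, but a maximal packing only needs one covered edge per copy of $H$, and every copy of $H$ through $V\setminus W$ meets the at most $|V\setminus W|\cdot n$ edges — hmm, so the clean route to $O(n)$ is: choose $W$ so that $V \setminus W$ has bounded size, which forces us instead to bound the \emph{number of vertices of positive $F$-degree}. The right statement is: since $\ex(n,H) = O(n^{2-c})$, only $O(n^{1-c/2})$ vertices have $F$-degree $\ge n^{c/2}$, but even the remaining vertices have nonzero $F$-degree. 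So honestly the error from $V \setminus W$ is $o(n^2)$ in general; to claim $O(n)$ one must additionally argue, using $\ex(n,H)=O(n^{2-c})$, that after Lemma \ref{l:h-pack} on the core one can absorb the $o(n)$ outside vertices one at a time into $O(1)$ copies of $H$ each (each such vertex has $\ge (1-o(1))n$ available neighbors in $\overbar F$, so greedily one covers all $\overbar F$-edges at that vertex with $O(n/||H||)$ copies — giving total $o(n)\cdot O(n) = o(n^2)$ again). I expect the actual paper handles this by a sharper packing result or by being content with the ratio; in my write-up I would prove the clean bound $\cl(K_n,H)\le (||K_n||-\ex(n,H))/||H|| + o(n^2)$ via the argument above, then observe that since $\ex(n,H)$ is superlinear (as $H$ is bipartite with a cycle) this already yields $\cl(K_n,H) = (||K_n||-\ex(n,H))/||H|| + o(\ex(n,H))$, and push for the $O(n)$ refinement by bounding $|V\setminus W| = O(\ex(n,H)/n) = O(n^{1-c})$ and covering the incident edges greedily, contributing $O(\ex(n,H)) \cdot$ (constant per vertex)$\,=\,O(n^{1-c})\cdot O(1) = o(n)$ extra copies — wait, that overcounts; the incident edges number $O(n^{2-c})$ which is $\omega(n)$, so the honest final bound from this method is $O(\ex(n,H)/||H||) = o(\ex(n,H))$ extra copies, matching the "in particular" clause but not literally $O(n)$; obtaining the literal $O(n)$ requires the stronger input that one can find an $H$-packing of $\overbar F$ missing only $O(n)$ edges \emph{including} handling low-degree vertices, e.g.\ by first packing the low-degree part using its own (still linear-sized) edge count. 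The main obstacle, then, is exactly this min-degree deficiency of $\overbar F$, and the tool to overcome it is the dense-core cleaning plus Lemma \ref{l:h-pack}, with the final bookkeeping on the sparse remainder being the delicate point.
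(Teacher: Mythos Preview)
Your overall strategy is exactly the paper's: take an extremal $H$-free graph $F$, pass to its complement $\overbar F$, clean out the vertices where $\overbar F$ has low degree (equivalently, where $F$ has high degree), apply Lemma~\ref{l:h-pack} on the dense core, and then greedily extend. You also correctly identify the sticking point: unless the set $L = V\setminus W$ of high-$F$-degree vertices has \emph{bounded} size, the edges incident to $L$ swamp the $O(n)$ error term. Your proposal never resolves this and ultimately retreats to an $o(n^2)$ (or at best $O(\ex(n,H))$) additive error, which proves the ``in particular'' clause but not the first sentence of the lemma.

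The missing idea is that $|L|$ is in fact bounded by a constant depending only on $H$. Set $\gamma=\delta/2$ and let $L=\{v:\deg_F(v)\ge \gamma n\}$, $S=V\setminus L$. First $|L|\le \gamma n/2$ by edge-counting (since $||F||=o(n^2)$), so $|S|\ge n/2$. Now look at the bipartite subgraph $B$ of $F$ between $L$ and $S$: it has at least $|L|(\gamma n-|L|)\ge |L|\gamma n/2$ edges, but since $H\subseteq K_{k,k}$ (with $k=|V(H)|$) and $F$ is $H$-free, $B$ is $K_{k,k}$-free. The K\H{o}v\'ari--S\'os--Tur\'an bound on $B$ (with parts of sizes $|L|$ and $|S|\le n$) then gives
\[
|L|\,\frac{\gamma n}{2}\;\le\;(k-1)^{1/k}\,n\,|L|^{1-1/k}+k|L|,
\]
which forces $|L|\le k(4/\gamma)^k=O_H(1)$. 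With this in hand, $\overbar F[S]$ has minimum degree at least $(1-\delta)|S|$, Lemma~\ref{l:h-pack} gives an $H$-packing missing at most $C|S|\le Cn$ edges there, and the remaining uncovered edges of $\overbar F$ number at most $Cn+|L|\,n=O(n)$, yielding the stated $O(n)$ bound after the greedy extension. This constant bound on $|L|$ is precisely the step your argument lacks.
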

\begin{proof}
Suppose that $H$ contains $k$ vertices (so $k \ge 4$) and let $2\ell$ denote the length of a shortest (hence even) cycle in $H$, $k\geq \ell$. On the one hand, any graph that is $C_{2\ell}$-free is also $H$-free and on the other hand, any graph that contains $K_{k,k}$ also contains $H$.
By the known lower bounds for $\ex(n,C_{2\ell})$ 
\cite{LPS-1988,margulis-1988}, see also the improved bounds in \cite{LUW-1999},
we have that $\ex(n,H) \geq \ex(n, C_{2\ell})= \Omega(n^{1+\frac{2}{3\ell+3}}) \ge \Omega(n^{1+\frac{4}{3k+6}})$.

Let $\delta,N,C$ be the constants from Lemma \ref{l:h-pack}, $\delta<1$,  and let $\gamma = \delta/2$.
Let $G=K_n$ where $n > N+k(4/\gamma)^k$ and let $G'$ be a spanning subgraph of $G$ which is $H$-free and has $||G'||=\ex(n,H)$. Recall that $\ex(n, H) \leq \ex(n, K_{k,k}) \leq kn^{2-1/k}$
as follows from Zarankiewicz' argument.
Let $L$ be the set of vertices of $G'$ whose degree in $G'$ is at least $\gamma n$ and let $S=V(G') \setminus L$ be the remaining vertices.
Note that $|L|\leq \gamma n/2$ since otherwise $||G'|| \geq \gamma^2 n^2/4 > \ex(n, H)$. In particular, we have that $|S|\geq n/2$.

We claim that $|L| \le k(4/\gamma)^k$. For consider the bipartite graph $B$ whose parts are $L,S$ and that contains all the edges of $G'$ with one endpoint in $L$
and the other in $S$. Then $||B|| \ge  |L|(\gamma n - |L|) \geq |L|\gamma n/2$ and since $B$ is $K_{k,k}$-free, it follows from the Kov\'ari-S\'os-T\'uran Theorem \cite{KST-1954}
that
$$
|L|\frac{\gamma}{2} n \le (k-1)^{1/k}n|L|^{1-1/k}+k|L|,
$$
which implies that $|L| < k(4/\gamma)^k$.

Let $G^*$ be the complement of $G[S]$. So, $G^*$ has $|S|=n-|L| \ge n - k(4/\gamma)^k > N$ vertices and its minimum degree is at least
$n-\gamma n - |L| \ge |S|(1-\delta)$. By Lemma \ref{l:h-pack}, $G^*$ contains an $H$-packing $\cP$ that covers all but at most $C|S| \le Cn$ edges of
$G^*$. Since $G'$ is $H$-free, any copy of $H$ in $G$ which does not have an edge in an element of $\cP$ must contain an edge that is either one of these
at most $Cn$ uncovered edges, or an edge incident to $L$. Hence we can augment $\cP$ to a maximal $H$-packing by adding at most
$Cn+|L|n$ elements to it.
\end{proof}

\subsection{$H$ is a forest}

We shall need additional results about special packings of trees in dense graphs.

\begin{lemma} \label{tree-packing} Let $k$ be a fixed positive integer  and $N$ be an integer,  $n> 6 k^2$.   Let $T$ be a forest on $k$ vertices. Then if $F$ is a graph on $n$ vertices with minimum degree at least $2n/3$ then $F$ contains $n$ edge-disjoint copies of $T$ such that for each vertex of $T$, its $n$ respective images in the $n$ copies  are distinct. In particular, each vertex of $F$ belongs to exactly $k$ copies of $T$.
\end{lemma}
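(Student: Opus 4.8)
The plan is to build the $n$ copies of $T$ one at a time, greedily, maintaining the invariant that the partial collection is already ``balanced'' in the sense that no vertex of $F$ has yet been used as the image of two different vertices of $T$ too many times. More precisely, fix a labelling $u_1,\dots,u_k$ of $V(T)$ that is \emph{compatible with a rooted forest structure}, i.e. each $u_j$ (for $j\ge 2$) has a unique neighbour $u_{p(j)}$ with $p(j)<j$ (a BFS/DFS order on each tree component, with the roots of the components listed first). We will produce copies $T^{(1)},\dots,T^{(n)}$ so that, writing $\phi_i:V(T)\to V(F)$ for the embedding realizing $T^{(i)}$, the images $\phi_1(u_j),\dots,\phi_n(u_j)$ are \emph{all distinct} for each fixed $j$ (this is the ``distinct respective images'' requirement, and it automatically forces each vertex of $F$ into exactly $k$ copies once all $n$ are built, since $\sum_j$ (number of copies using $v$ as image of $u_j$) counts, for each of the $k$ indices $j$, the vertex $v$ exactly once across the $n$ copies).

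First I would set up the bookkeeping. For each index $j\in[k]$ maintain a set $A_j\subseteq V(F)$ of vertices already used as the image of $u_j$; initially all $A_j=\emptyset$, and after step $i$ we have $|A_j|=i$. Also maintain, for each (unordered) pair of edges across copies, the used-edge set $E^{\mathrm{used}}\subseteq E(F)$; after step $i$ we have $|E^{\mathrm{used}}|=i\,\|T\|\le i(k-1)$. In step $i+1$ (for $i<n$) I embed $u_1,u_2,\dots,u_k$ in this order. When it is time to place $u_j$: if $j=1$ (or $u_j$ is a root of a new component) pick any vertex of $V(F)\setminus A_j$ not yet used in the current copy; if $j\ge 2$ with parent $u_{p(j)}$ already mapped to some $w=\phi(u_{p(j)})$, I need a vertex $x\in N_F(w)$ with $x\notin A_j$, $x$ distinct from the at most $k-1$ already-placed images in the current copy, and the edge $wx\notin E^{\mathrm{used}}$. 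The number of forbidden choices is at most $|A_j| + (k-1) + \deg^{\mathrm{used}}(w) \le (n-1) + (k-1) + (i\cdot(k-1))$? — that crude bound is too weak, so the real point is the next paragraph.

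The key step is the counting that shows a valid choice always exists. The minimum degree condition gives $\deg_F(w)\ge 2n/3$. The vertices forbidden for $u_j$ are: (i) vertices in $A_j$, of which there are $i\le n-1$ — but $A_j\subseteq V(F)$ and we only need $x\in N_F(w)\setminus A_j$, so the deficit here is at most $|A_j| \le i$; (ii) the $\le k-1$ images already chosen in the current copy; (iii) neighbours $x$ of $w$ for which $wx\in E^{\mathrm{used}}$. For (iii), note $E^{\mathrm{used}}$ has at most $i(k-1)$ edges total, but what matters is how many are incident to $w$; here we use that $w$ itself has been used as an image only few times so far — in fact $w=\phi(u_{p(j)})$ is being used in the \emph{current} copy, and across previous copies $w$ lies in at most $k-1$ of them... no: across the $i$ previous copies $w$ could appear in up to $\min(i,k)$ of them, contributing at most $k(k-1)$ used edges at $w$; combined with the current copy's $\le k-1$ edges this is $\le k^2$. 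Hence the number of available choices for $x$ is at least $2n/3 - i - (k-1) - k^2$. Since $i\le n-1$ this is again not obviously positive, so the honest fix is to strengthen the invariant: maintain that the sets $A_j$ are grown in a \emph{round-robin} fashion so that at every intermediate moment each vertex of $F$ lies in at most $\lceil i\cdot k/n\rceil \le k$ of the sets $A_1,\dots,A_k$ combined — equivalently, process so that after step $i$ each vertex of $F$ has been used as some image at most $\lceil ik/n\rceil$ times; then ``$x$ already used (as $u_j$'s image, i.e.\ $x\in A_j$)'' plus the degree-at-$w$ bound from $E^{\mathrm{used}}$ are both controlled, and one checks $2n/3 > k + k^2 + (k-1)$, which holds since $n > 6k^2$. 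I expect the main obstacle to be exactly this: choosing the order of embedding and the greedy selection rule so that the ``all images distinct across the $n$ copies'' constraint (which is global, touching up to $n$ forbidden vertices) is reconciled with the local degree bound $2n/3$; the resolution is that we never really forbid all of $A_j$ at once — we only ever need one new vertex per step — so a careful amortized/round-robin argument, rather than the naive union bound, is what makes the inequality $2n/3 - O(k^2) > 0$ suffice. Once existence of a valid $x$ at every micro-step is established, iterating $k$ times builds $T^{(i+1)}$, iterating $n$ times builds all copies, and the edge-disjointness and distinctness-of-images are immediate from the invariants.
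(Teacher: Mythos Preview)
Your greedy copy-by-copy strategy has a genuine gap that the round-robin patch does not close. The obstruction is the constraint $x\notin A_j$ when you place $u_j$ in the $(i+1)$-st copy: after $i$ steps $|A_j|=i$, and this is true no matter how ``balanced'' the growth of the $A_j$'s is. So the number of candidates for $x$ is at most $|N_F(w)\setminus A_j|$, which by inclusion--exclusion is at least $2n/3-i$ but need not be more; once $i>2n/3$ this lower bound is useless. In the extreme case $i=n-1$ there is a \emph{single} vertex $v^\ast\notin A_j$, and you would need $v^\ast\in N_F(w)$ for the (now forced) choice of $w$; nothing in your invariants guarantees this adjacency. Your round-robin invariant (each vertex of $F$ lies in at most $k$ of the sets $A_1,\dots,A_k$) is automatically true and only controls $\deg_{E^{\rm used}}(w)$, not $|A_j|$; so the inequality you arrive at, $2n/3>O(k^2)$, is correct but irrelevant to the real bottleneck.

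What is actually needed is a \emph{simultaneous} choice across all $n$ copies, not a sequential one. The paper does this by induction on $k$: strip a leaf $v$ with neighbour $v'$ from $T$, apply the induction hypothesis to get $n$ edge-disjoint copies of $T-v$ in $F$ (with the distinctness property), then delete those edges and also, for each copy, the edges from its image of $v'$ to the rest of that copy. Each vertex of $F$ lies in exactly $k-1$ of the smaller copies, so at most $(k-1)k$ edges are removed at any vertex, and the leftover graph has minimum degree at least $2n/3-(k-1)k\ge n/2$. Dirac's theorem now gives a Hamiltonian cycle, and you extend every copy at once by sending each image of $v'$ to its successor on the cycle; the cycle is precisely what guarantees all $n$ images of $v$ are distinct. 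If you want to salvage your scheme, the right reorganisation is to embed vertex-by-vertex rather than copy-by-copy: first choose all $n$ images of $u_1$, then all $n$ images of $u_2$, and so on; at the $j$-th stage you are looking for a bijection from the (already placed) images of $u_{p(j)}$ to $V(F)$ along unused edges, i.e.\ a perfect matching in an auxiliary bipartite graph of minimum degree $>n/2$---which is exactly what the Hamiltonian cycle provides.
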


\begin{proof}
We shall use induction on $k$ with a trivial basis $k=1$.  Let $T$ be a forest on $k$ vertices and $T'$ is the forest obtained from $T$ by removing a leaf $v$ adjacent to a vertex $v'$. Note that if there is no such leaf, then $T$ has no edges and the result follows trivially. Then by induction hypothesis there is a set $\T'$ of  $n$ pairwise edge-disjoint copies of $T'$ in $F$ such that in particular
the images of $v'$ are distinct. Let $F'$ be a graph obtained from $F$ by deleting the edges of copies of $T'$ from $\T'$  as well as by deleting all those edges of $F$ that 
join an image of $v'$ to the vertices of its copy of $T'$, for each copy of $T'$ from $\T'$.  We see that each vertex of $F$ is an image of each vertex of $T'$ in some copy of $T'$.
Thus the number of deleted edges that were incident to each vertex is at most $(k-1)^2 + (k-1)$.
Therefore the minimum degree of $F'$ is at least $2n/3 - (k-1)k\geq n/2$. By Dirac's theorem, we see that there is a Hamiltonian cycle in $F'$.
Extend each copy of $T'$ from $\T'$  to a copy of $T$ by picking a neighbor of the image of $v'$ in that copy on this cycle such that distinct vertices get distinct neighbors. These newly picked neighbors serve as images of $v$ in respective copies of $T$.
\end{proof}

\begin{lemma}\label{l:4}
Let $H$ be a nonempty forest. Then, $\cl(K_n,H) \le (||K_n|| - \ex(n,H))/||H||+O(1)$.
In particular, $\cl(K_n,H)= (||K_n|| - \ex(n,H))/||H||+o(\ex(n,H))$.
\end{lemma}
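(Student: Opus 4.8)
The plan is to follow the same template as in the previous cases: remove a near-extremal $H$-free spanning subgraph $G'$ of $K_n$, and then pack $H$'s cheaply into the complement so as to cover essentially all edges not lying in $G'$, leaving only $O(1)$ uncovered edges (rather than $O(n)$ as in Lemma \ref{l:3}). The gain comes from the fact that $H$ is a forest, so $\ex(n,H)$ is \emph{linear} in $n$ — writing $k = |V(H)|$, every $H$-free graph on $n$ vertices has fewer than $(k-1)n$ edges, so $||K_n|| - \ex(n,H) = \binom{n}{2} - \Theta(n)$, and the target bound $(||K_n|| - \ex(n,H))/||H|| + O(1)$ is much stronger than in the previous lemmas. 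First I would fix a near-extremal $H$-free spanning subgraph $G'$ with $||G'|| = \ex(n,H)$; since $G'$ has $O(n)$ edges, the set $L$ of vertices of positive degree in $G'$, together with a bounded-degree cleanup, can be handled, but the essential point is that $G'$ has maximum degree at most $k-1$ along almost all vertices and only a bounded number of ``high-degree'' vertices in the relevant sense. Actually, the cleanest route is: let $G^*$ be the complement of $G'$ in $K_n$; then $G^*$ has minimum degree at least $n - 1 - (k-1) > 2n/3$ for $n$ large, so $G^*$ is a dense graph to which Lemma \ref{tree-packing} applies.

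The key step is then to cover all but $O(1)$ edges of $G^*$ by edge-disjoint copies of $H$. Here I would use Lemma \ref{tree-packing} as the engine: iteratively peel off ``rounds'' of $n$ edge-disjoint copies of $H$ (one through each vertex in the distinctness sense guaranteed by the lemma), each round using a bounded number $k$ of edges at every vertex and hence reducing all degrees by exactly $\binom{k}{1}\cdot$(something bounded) — more precisely each round removes $k\,||H||$ edge-slots incident to each vertex, keeping the graph $(2n/3)$-regular-ish enough to reapply the lemma. After $\Theta(n)$ such rounds the remaining graph has only $O(1)$ edges at each vertex... but that still gives $O(n)$ leftover edges, so a naive round-peeling is not quite enough. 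Instead I would do the following: apply Lemma \ref{tree-packing} repeatedly as long as the minimum degree stays above $2n/3$, covering all but a graph $F_0$ of bounded maximum degree; then observe that a forest decomposes — a classical fact (Nash-Williams / greedy) that any graph with $m$ edges is the union of $\lceil$arboricity$\rceil$ forests, but we need $H$-\emph{copies}, not arbitrary forests. The correct finishing move: once the leftover graph is sufficiently structured (bounded degree, still $\Omega(n)$ edges), cover it using that $K_n$ minus $O(1)$ edges decomposes into copies of any fixed forest $H$ when the divisibility conditions hold — indeed, forests impose \emph{no} divisibility constraints ($\gcd(H)=1$ whenever $H$ has an edge, and one can always adjust $||H|| \mid ||G||$ by discarding $O(1)$ edges), so Wilson's theorem gives a perfect $H$-packing of $K_n$ minus at most $||H||-1 = O(1)$ edges.

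This last observation is really the heart of it, and suggests a much shorter argument that bypasses Lemma \ref{tree-packing} entirely for the leftover stage: since $H$ is a nonempty forest, $\gcd(H) = 1$, so $K_n$ is $H$-divisible precisely when $||H||$ divides $\binom{n}{2}$; delete at most $||H|| - 1 = O(1)$ edges of $K_n$ to make this hold (these deleted edges we simply leave uncovered), apply Wilson's theorem to get a perfect $H$-packing $\cP$ of the resulting graph, and then check maximality — any copy of $H$ in $K_n$ not covered by $\cP$ must use one of the $O(1)$ deleted edges, so we augment $\cP$ with at most $O(1)$ further copies. Wait — that would give $\cl(K_n,H) \le ||K_n||/||H|| + O(1)$, which is \emph{weaker} than claimed, because it ignores the $\ex(n,H)$ saving. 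So the $\ex(n,H)$-sized $H$-free subgraph $G'$ genuinely has to be left \emph{uncovered}, and the real content is: pack $H$'s into $G^* = \overline{G'}$ covering all but $O(1)$ of its edges. So the argument must be: take $G'$ near-extremal $H$-free; then $G^*$ has $O(n)$ missing edges and min degree $> 2n/3$; pad $G^*$ with $O(1)$ dummy vertices and delete $O(1)$ edges to make it $H$-divisible with min degree still above the Gustavsson/Keevash threshold, apply the perfect-packing result (as in Lemma \ref{l:h-pack}) — but that loses $O(n)$ edges to the dummy vertices, not $O(1)$. The main obstacle, and where Lemma \ref{tree-packing} is indispensable, is precisely getting the leftover down from $O(n)$ to $O(1)$: I expect the argument uses Lemma \ref{tree-packing} to pre-process $G^*$ into a subgraph whose complement (within $G^*$) is $H$-divisible \emph{without} adding any vertices — possible because removing whole ``rounds'' of $n$ copies changes every degree by the same bounded amount, so one can steer the degrees and edge-count into the divisibility class while only sacrificing $O(1)$ edges — and then apply Wilson's theorem to the remainder. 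Verifying that the degree/edge-count bookkeeping can be closed up exactly, using only $O(1)$ slack, is the step I'd expect to be fiddly.
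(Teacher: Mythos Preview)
Your proposal has two genuine gaps, and in its current form does not reach the $O(1)$ error term.

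First, the claim that $G^* = \overline{G'}$ has minimum degree at least $n-1-(k-1) > 2n/3$ is not justified. You only know $||G'|| = \ex(n,H) = O(n)$; this forces all but $O(1)$ vertices of $G'$ to have bounded degree, but a bounded number of vertices of $G'$ may have degree $\Theta(n)$ (nothing you wrote rules out, say, a star component in $G'$). The paper handles this exactly as you should: split $V$ into the set $L$ of high-degree vertices of $G'$ (showing $|L|=O(1)$) and the rest $S$, and work primarily inside $\overline{G'[S]}$, which genuinely has minimum degree $\ge 2n/3$.

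Second, and more seriously, you never identify a mechanism that gets the leftover down from $O(n)$ to $O(1)$. Your own analysis correctly shows that round-peeling via Lemma~\ref{tree-packing} leaves $O(n)$ edges, that Lemma~\ref{l:h-pack} (and any dummy-vertex trick) leaves $O(n)$ edges, and that applying Wilson to $K_n$ forfeits the $\ex(n,H)$ saving. The ``steering divisibility'' idea does not close the gap either: even if you make the remainder $H$-divisible and then invoke Gustavsson/Keevash directly for a \emph{perfect} packing on $S$, you still have $\Theta(n)$ edges of $\overline{G'}$ running between $L$ and $S$ that must be covered, and greedily covering them costs $\Theta(n)$ extra copies. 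The paper solves both issues with ingredients you are missing: (i) a result of Yuster (2000) that any graph on $m$ vertices with minimum degree $\ge m/2$ has an $H$-packing leaving fewer than $||H||=O(1)$ edges uncovered, applied to the remainder on $S$; and (ii) a careful use of Lemma~\ref{tree-packing} to build, for each $s\in S$, a fan of $|L|$ edge-disjoint copies of $H$ minus a leaf inside $S$, each of which is completed to a full copy of $H$ by a single $L$--$S$ edge, thereby absorbing \emph{all} $L$--$S$ edges of $\overline{G'}$ into the packing at no extra cost. After this, only the $\binom{|L|}{2}=O(1)$ edges inside $L$ and the $<||H||$ Yuster leftovers remain.
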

\begin{proof}
Assume that the number of vertices of $H$ is $k \ge 3$ and $H$ has at least two edges.
If $H$  has one edge, and $n\geq |V(H)|$, we have ${\rm ex}(n, H)=0$ and $\cl(K_n, H)=||K_n|| =(||K_n||-{\rm ex}(n, H))/||H||$.
Let $G=K_n$ and let $G'$ be a spanning subgraph of $G$ which is $H$-free and satisfies $||G'||=\ex(n,H)$.
By the simple bounds on $\ex(n,H)$ we have that $\lfloor n/2 \rfloor  \leq ||G'|| \le kn$.
Let $L$ be the set of vertices of $G'$ of degree at least $n/4$ and let $S$ be the set of remaining vertices, let $|L|=\ell$.
Since $(n/4)|L| \le 2||G'|| \le 2kn$ we have that $|L| \le 8k$.
It follows that $\delta(\overline{G'[S]}) \ge n-1-n/4-|L| \ge 2n/3$ (assuming $n \ge 12(8k+1)$) .
Unlike the previous cases, we shall first completely cover the edges of $\overline{G'}$ between $L$ and $S$, 
then we shall (almost completely) pack the remaining edges of $\overline{G'[S]}$.

Now, consider $H$, our given forest, let $H'$ be obtained from $H$ by deleting a leaf $u'$ adjacent to 
a vertex $u$. Let $H^*$ be a union of $\ell$ copies of $H'$ that pairwise share only a vertex corresponding to $u$. Apply Lemma \ref{tree-packing} to find $|S|$ edge-disjoint copies of $H^*$ in $F= \overbar{G[S]}$  such that each vertex of $S$ serves as an image of $u$ in some copy of $H^*$.
For a vertex $s\in S$, let $H^*_s$ be the copy of $H^*$ such that $s$ serves as an image of $u$.
Let $X_s$ be the set of all vertices in $L$ such that $s$ is not adjacent to its members, let  $d_s=|X_s|$,  note that $0\leq d_s\leq \ell=|L|$.
Delete $\ell -d_s$ copies of $H'$ from $H^*_s$ and add the pairs $\{sx: ~ x\in X_s\}$ to the edge set of the resulting graph, call it $H''_s$. Note that $H_s''$ is an edge disjoint union of $d_s$ copies of $H$  that covers all edges from $s$ to $L$ in the complement of $G'$.
Thus all graphs $H''_s$, for $s\in S$ are pairwise edge-disjoint and cover all the edges between $S$ and $L$ in the complement of $G'$. Let $\P = \{ H''_s: ~ s\in S\}$.
Let $R$ be a graph with a vertex set $S$  with remaining edges that do not belong to either  $G'$ or any of $H_s''$, $s\in S$. 
We see that the minimum degree of $R$ is at least $2n/3 - |H^*|^2 \geq 2n/3 - \ell^2k^2\geq |S|/2$ for $n$ sufficiently large.

By a result of Yuster  \cite{yuster-2000}, if $H$ is a tree and $|S|$ is sufficiently large, a graph with $|S|$ vertices and minimum degree at least $|S|/2$ has an $H$-packing where less than $||H|| < k$ edges remain uncovered.
The proof in \cite{yuster-2000} gives the same result when $H$ is a forest. 
Another way to see this is that  from any forest without isolated vertices, one can construct a tree consisting of two edge-disjoint copies of that forest, and pack with that tree thereby packing with the forest.

Let then $\Q$ be   an $H$-packing  of  $R$ where less than $k$  edges remain uncovered.

Hence $\P \cup \Q$ is an $H$-packing of $\overline{G'}$ which covers all but at at most
$k$ edges in $\overline{G'[S]}$ plus the edges connecting two elements of $L$ which are yet uncovered. However, there are at most $\binom{\ell}{2} \le 32k^2$  edges induced by $L$.
Hence, we can obtain a maximal $H$-packing of $G$ of size at most $|\cP \cup Q|+k+32k^2 \le (||K_n|| - \ex(n,H))/||H||+k+32k^2$, as required.
\end{proof}

\section{Hypercubes}\label{s:cube}

In this section we prove Theorem \ref{t:qn}. We first note that computing $\cl(Q_n, Q_2)$ (and moreover $\cl(Q_n,Q_d)$) is difficult already for very small values of $n$.
While $\cl(Q_3, Q_2)=2$, $\cl(Q_4, Q_2)=3$ are trivial, it is only known that $\cl(Q_5, Q_2)\in \{7,8\}$ \cite{kaufmann-2018}.
Recall also that $Q_n$ is an $n$-regular graph with $2^n$ vertices hence $||Q_n||=n2^{n-1}$. More generally, we observe the following.

\begin{lemma}\label{l:count}
The number of copies of $Q_d$ in $Q_n$ is $2^{n-d}\binom{n}{d}$.
Each edge of $Q_n$ belongs to $\binom{n-1}{d-1}$ copies of $Q_d$.
\end{lemma}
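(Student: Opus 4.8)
The plan is to first establish the structural fact that every copy of $Q_d$ in $Q_n$ is a \emph{subcube}, after which both counts are elementary. Identify $V(Q_n)$ with $\{0,1\}^n$, so that two vertices are adjacent iff they differ in exactly one coordinate, and call a \emph{$d$-subcube} a subgraph obtained by fixing the values of some $n-d$ coordinates and letting the remaining $d$ coordinates range freely; such a subgraph is visibly isomorphic to $Q_d$. There are $\binom{n}{d}$ choices for the set of $d$ free coordinates and $2^{n-d}$ choices for the values on the fixed ones, giving exactly $2^{n-d}\binom{n}{d}$ $d$-subcubes, so the first statement follows once we know these are all the copies of $Q_d$.

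To prove that an arbitrary copy $K\cong Q_d$ of $Q_d$ in $Q_n$ is a subcube, fix a vertex $u_0$ of $K$; since $x\mapsto x\oplus u_0$ is an automorphism of $Q_n$ that permutes the copies of $Q_d$, we may assume $u_0=\mathbf 0$. As $u_0$ has degree $d$ in $K$, its $K$-neighbours are $d$ distinct vertices each differing from $\mathbf 0$ in one coordinate, hence equal to $e_{j_1},\dots,e_{j_d}$ for distinct coordinates forming a set $J=\{j_1,\dots,j_d\}$. Identify $V(Q_d)$ with the subsets of $[d]$ in the standard way, with $u_0=\emptyset$ and the neighbour of $\emptyset$ in direction $a$ sent to $e_{j_a}$. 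I would then show by induction on $|A|$ that the vertex $A\subseteq[d]$ of $K$ is sent to $\sum_{a\in A}e_{j_a}$. For the inductive step with $|A|\ge 2$, choose distinct $a,b\in A$; in $Q_d$ the vertices $A\setminus\{a\}$ and $A\setminus\{b\}$ are at distance $2$ and thus have exactly two common neighbours, namely $A$ and $A\setminus\{a,b\}$; their already-identified images in $Q_n$ are also at distance $2$ and so have exactly two common neighbours, which one computes to be $\sum_{c\in A}e_{j_c}$ and $\sum_{c\in A\setminus\{a,b\}}e_{j_c}$. Since the embedding is a graph homomorphism, the image of $A$ is one of these two; since by induction the image of $A\setminus\{a,b\}$ is the latter and the embedding is injective, the image of $A$ must be $\sum_{c\in A}e_{j_c}$. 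This identifies $K$ with the $d$-subcube with free coordinates $J$ and zeros elsewhere, proving the claim and with it the first count.

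For the second statement, an edge $e$ of $Q_n$ flips some coordinate $i$ between vertices $x$ and $x\oplus e_i$, and by the first part a copy of $Q_d$ containing $e$ is a $d$-subcube; it contains $e$ exactly when $i$ is one of its $d$ free coordinates, the other $d-1$ free coordinates are chosen among the remaining $n-1$, and the fixed coordinates agree with $x$, giving $\binom{n-1}{d-1}$ copies. (Alternatively, double count incident pairs (edge,\,$d$-subcube) using $||Q_d||=d2^{d-1}$, $||Q_n||=n2^{n-1}$ and the edge-transitivity of $Q_n$, which yields $\binom{n}{d}2^{n-d}d2^{d-1}/(n2^{n-1})=\binom{n-1}{d-1}$.) The only step with genuine content is the structural claim that every copy of $Q_d$ in $Q_n$ is a subcube; the care there lies in the fact that the embedding $K\hookrightarrow Q_n$ need not be induced, so one must argue directly that common neighbours in $Q_d$ map to common neighbours in $Q_n$ and that the "exactly two common neighbours" fact is applied to a bona fide distance-$2$ pair at each stage of the induction.
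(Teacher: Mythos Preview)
Your proof is correct and follows the same counting approach as the paper: represent each $Q_d$ by a vector in $\{0,1,\star\}^n$ with $d$ stars, then count directly. The only difference is that you carefully prove the structural claim that every subgraph of $Q_n$ isomorphic to $Q_d$ is a subcube, whereas the paper simply asserts ``each copy of $Q_d$ can be represented by an $n$-vector in $\{0,1,\star\}^n$ with $d$ entries of $\star$'' without justification; your common-neighbour induction is a clean way to supply this missing step and makes the argument self-contained.
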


\begin{proof}
Each copy of $Q_d$ can be represented by an $n$-vector in $\{0,1,\star\}$ with $d$ entries of $\star$.
So, the first part of the lemma follows from the fact that there are $2^{n-d}\binom{n}{d}$ such vectors.
If an edge $e$ is fixed, its endpoints differ in exactly one position, say position $i$. Then the $i$'th coordinate corresponds to a $\star$
in any  copy of $Q_d$ containing $e$. There are $\binom{n-1}{d-1}$ ways to choose other $\star$ positions and the remaining coordinates must take the respective values of endpoints of $e$.
\end{proof}

Let $f(n,d) = ||Q_n||- ex(Q_n, Q_d)$ be the smallest size  of an edge subset $S$ of $Q_n$ such that each copy of $Q_d$ in $Q_n$ contains at least one element of $S$.
Identically, $f(n,d)$ is the transversal number of the hypergraph whose vertices are the edges of $Q_n$ and whose edges are the (edges of) the $Q_d$ copies in $Q_n$.
Let 
$$
c(d) = \lim_{n\rightarrow \infty} \frac{f(n,d)}{||Q_n||}\;.
$$
Alon, Krech, and Szab\'o \cite{AKS-2007} proved that for some absolute positive constant $C$,
\begin{equation}\label{e:alon}
\Omega \left( \frac{\log d }{ d~2^d} \right) \leq c(d) \leq  \frac{C}{d^2}\;.
\end{equation}

\subsection{Lower bound}

In this subsection we prove the simple lower bounds stated in Theorem \ref{t:qn}.
Let $\cP$ be a maximal $Q_d$-packing of $Q_n$.
Since $\cP$ is maximal, every copy of $Q_d$ in $Q_n$ contains an edge of a member of $\cP$. Hence, by Lemma \ref{l:count}, we are counting
$\binom{n}{d}2^{n-d}$ edges in  this way, but each edge may be counted many times, as it may appear in $\binom{n-1}{d-1}$ copies of $Q_d$.
Thus, the total number of edges of all elements of $\cP$ is at least $\binom{n}{d}2^{n-d}/\binom{n-1}{d-1}$.
Since each element of $\cP$ consists of $d2^{d-1}$ edges it follows that
$$
|\cP|  \ge \frac{\binom{n}{d}2^{n-d}}{\binom{n-1}{d-1}d2^{d-1}} = \frac{2^{n-2d+1}n}{d^2}\;.
$$
To improve this lower bound by a factor of $\log d $ we use (\ref{e:alon}). 
Indeed, if $\cP$ is the smallest possible set of $Q_d$'s in $Q_n$ that contains an edge of each $Q_d$ of $Q_n$ (namely $|\cP|=\cov(Q_n,Q_d)$),
then the set of all edges of members of $\cP$ forms a transversal of $Q_d$'s in $Q_n$.
By (\ref{e:alon}), $||Q_d||\cdot |\cP| \geq  \Omega \left( \frac{\log d }{ d~2^d} \right) ||Q_n||$, thus 
$$
\cl(Q_n, Q_d) \ge \cov(Q_n,Q_d) = |\cP| \geq \Omega \left( \frac{\log d }{ d~2^d} \right) \frac{||Q_n||}{||Q_d||}\;.
$$

To get a lower bound on $\cl(Q_n, Q_2)$, we use a result of Baber \cite{baber-2012} stating that $\ex(Q_n, Q_2) \leq 0.6068 ||Q_n||(1+o(1))$.
Thus by (\ref{e:1}), we have 
$$
\cl(Q_n,Q_2) \ge \frac{(||Q_n|| - \ex(Q_n,Q_2))}{||Q_2||} \geq  0.3932 \frac{||Q_n||}{||Q_2||}(1-o(1))\;.
$$
We note that Erd\H{o}s conjectured that $\ex(Q_n, Q_2) = \frac{1}{2}||Q_n||(1+o(1))$, so if true, the constant $0.3932$ in the last inequality can be replaced by $\frac{1}{2}$.

\subsection{Upper bound}

In this subsection we prove the upper bounds stated in Theorem \ref{t:qn}.
For $i=0, \ldots, n$ we denote by $V_i$ the set of vertices of $Q_n$ with $i$ one's in their vector representation. We say that vertices or respective vectors from $V_i$ have weight $i$.
For $i=1, \ldots, n$, let  $L_i$ be the set of edges of $Q_n$ with endpoints in $V_{i-1}\cup V_i$.  We call $L_i$ the $i^{\rm th}$ {\em edge layer} of $Q_n$. 
We provide constructions of maximal $Q_d$-packings $\cP$ of $Q_n$ such that the edges  of $\cP$ cover almost completely every $(d-1)^{\rm st}$ layer of $Q_n$,
for $d\geq 3$, and for $d=2$, these edges cover two out of every three consecutive layers of $Q_n$ almost completely.

Let $I=[n/2-\sqrt{n \log n},  n/2 + \sqrt{n \log n}]$. Observe that $\sum_{i\not\in I} |L_i| = o(||Q_n||)$, thus we shall focus on the middle layers $L_i$,  $i\in I$ and later
consider any maximal packing of the remaining layers. 
We denote the edge set in these {\it middle layers} by $M = \cup_{j\in I }L_j$.

We consider first the case $d=2$ and later see how to generalize our arguments to an arbitrary $d$.
Let $M= M_0\cup M_1\cup M_2$, where $e\in M_i$ if and only if $e\in L_j$, $j\equiv i \pmod 3$.

\begin{lemma}\label{l:claim1}
Let $j\in I$. There is a $Q_2$-packing denoted $\cP_j$, such that each member of $\cP_j$ contains at least one edge of $L_j \cup L_{j+1}$ and such that $\cP_j$ covers all but at most $O(n^{-1/3} (|L_j|+|L_{j+1}|))$ edges of $L_j \cup L_{j+1}$. 
\end{lemma}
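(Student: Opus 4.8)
The plan is to build $\cP_j$ layer-pair by layer-pair, working inside the bipartite-like structure formed by the two consecutive vertex levels $V_{j-1}, V_j, V_{j+1}$ that carry the edges of $L_j \cup L_{j+1}$. The key observation is that a copy of $Q_2$ in $Q_n$ living on these levels is a $4$-cycle of the form $v, v+e_a, v+e_a+e_b, v+e_b$ where one of $e_a, e_b$ is added and the other is either added or removed; to catch edges of both $L_j$ and $L_{j+1}$ with a single $Q_2$ we want the "down-up-down-up'' squares whose bottom vertex is in $V_{j-1}$, middle vertices in $V_j$, top vertex in $V_{j+1}$. So first I would set up the auxiliary structure: fix a reference vertex pattern and associate to each vertex $u \in V_j$ the set of $Q_2$'s having $u$ as a bottom-middle vertex, then reformulate "cover almost all edges of $L_j \cup L_{j+1}$'' as a near-perfect matching / near-perfect packing problem in an appropriate regular (or nearly regular) auxiliary hypergraph whose vertices are the edges of $L_j \cup L_{j+1}$.

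Second, I would exploit regularity. For $j \in I$ the levels $V_{j-1}, V_j, V_{j+1}$ all have size $2^n\binom{n}{j}/2^n \cdot(1+o(1))$ comparable to the central binomial coefficient, and $|L_j|, |L_{j+1}|$ are within a $1+o(1)$ factor of one another; moreover every edge of $L_j$ lies in roughly $j-1$ such down-up-down-up squares and every edge of $L_{j+1}$ in roughly $n-j$ of them, and these counts are $\Theta(n)$ for $j \in I$. This near-regularity, together with a small codegree bound (two edges of $L_j \cup L_{j+1}$ lie together in $O(1)$ common $Q_2$'s), is exactly the setting of the Pippenger–Spencer / Rödl-nibble theorem for covers (equivalently, near-perfect matchings in nearly-regular, low-codegree hypergraphs). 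Applying that theorem yields an edge-disjoint family of $Q_2$'s, each using at least one edge of $L_j \cup L_{j+1}$, leaving only an $o(1)$-fraction of $L_j \cup L_{j+1}$ uncovered. To get the stated quantitative bound $O(n^{-1/3})$ rather than a bare $o(1)$, I would instead use the quantitative version of the nibble (Pippenger–Spencer with explicit error, or Alon–Kim–Spencer type bounds), where the leftover fraction is a fixed negative power of the degree; since the relevant degree is $\Theta(n)$, this gives leftover $O(n^{-c})$ for some $c>0$, and $c = 1/3$ is a safe, round choice. One then discards the (few) squares that stray outside $L_j \cup L_{j+1}$ only partially — actually every down-up-down-up square has all four of its edges in $L_j \cup L_{j+1}$, so no discarding is needed and the packing is automatically inside the target layers.

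Third, I would assemble $\cP_j$: take the family produced by the nibble, which is an edge-disjoint set of $Q_2$'s each meeting $L_j \cup L_{j+1}$ and covering all but $O(n^{-1/3}(|L_j|+|L_{j+1}|))$ of those edges, and declare that to be $\cP_j$. The main obstacle I anticipate is purely bookkeeping: checking that the auxiliary hypergraph really is $(1+o(1))$-regular on the vertex set $L_j \cup L_{j+1}$ despite the two layers having genuinely different per-edge degrees ($\approx j-1$ versus $\approx n-j$), which for $j$ near $n/2$ are equal up to $1+o(1)$ but not identical. The clean fix is to restrict attention to $j \in I$, where $|j - n/2| \le \sqrt{n\log n}$ forces $(j-1)/(n-j) = 1 + O(\sqrt{\log n / n})$, so the hypergraph is regular to within a $1+o(1)$ factor and the nibble still applies with the same leftover bound; alternatively one splits by edge type and runs two coupled nibbles. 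A secondary point to be careful about is edge-disjointness across the two layers simultaneously — but this is automatic since the nibble produces a single edge-disjoint family in one hypergraph. No interaction with the layers $L_i$, $i \notin \{j, j+1\}$, needs to be controlled here, since Lemma~\ref{l:claim1} only asserts coverage of $L_j \cup L_{j+1}$; the global assembly of the $\cP_j$'s into a maximal packing of $Q_n$ is handled afterwards.
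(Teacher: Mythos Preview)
Your approach is essentially the same as the paper's: form the $4$-uniform auxiliary hypergraph on $L_j \cup L_{j+1}$ whose hyperedges are the ``down--up--down--up'' $Q_2$'s, check near-regularity for $j\in I$, and apply Alon--Kim--Spencer to extract a matching leaving $O(D^{-1/3})$-fraction uncovered with $D=\Theta(n)$. Two small corrections: the exact degrees are $n-j$ for an edge of $L_j$ and $j$ for an edge of $L_{j+1}$ (you have them swapped and slightly off), and the codegree is not just $O(1)$ but in fact at most $1$ (the hypergraph is linear, since two $Q_2$'s share at most one edge of $Q_n$) --- this linearity is precisely the hypothesis needed for the Alon--Kim--Spencer bound with exponent $1/3$, so your ``safe, round choice'' is actually forced by the theorem you cite.
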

\begin{proof}
Let $H_j$ be the hypergraph whose vertices correspond to the edges of $L_j \cup L_{j+1}$ and whose hyperedges are four-element subsets forming a copy of $Q_2$ having all of their edges in $L_j \cup L_{j+1}$. Since any two copies of $Q_2$ intersect in at most one edge, $H_j$ is simple (linear).
Note that the degree of an element of $L_j$ in $H_j$ is $n-j$ since it appears in precisely $n-j$ $Q_2$'s having all of their edges $L_j \cup L_{j+1}$.
Indeed, suppose this element is the edge $e=(u,v) \in L_j$ where $u$ is a vector of weight $j-1$  and $v$ is a vector of weight $j$.
Then a vertex $x$ of a $Q_2$ containing $e$ and which is adjacent to $u$ must be also of weight $j$ but distinct from $v$, so there are $n-j$ options to choose $x$.
The fourth vertex of this $Q_2$ is now completely determined.
Similarly, the degree of an element of $L_{j+1}$ in $H_j$ is $j$.
To see this, suppose this element is the edge $e=(u,v) \in L_{j+1}$ where $u$ is a vector of weight $j$ and $v$ is a vector of weight $j+1$.
Then a vertex $x$ of a $Q_2$ containing $e$ and which is adjacent to $v$ must be also of weight $j$ but distinct from $u$, so there are $j$ options to choose $x$.
The fourth vertex of this $Q_2$ is now completely determined.
We see, using that $j \in I$,  that the absolute difference between the degrees of any two vertices of $H_j$ is at most $|n-2j| \le 4\sqrt{n \log n}$.

A result of Alon, Kim, and Spencer \cite{AKS-1997}, implies that if a $4$-uniform hypergraph $H$  has minimum degree at least $D - O(\sqrt{D\log D})$, where $D$ is the maximum degree, then there is a matching in the hypergraph covering all but at most $|V(H)|O(D^{-1/3})$ vertices.
Since the maximum degree of $H_j$ is $\max\{j,n-j\} \ge n/2$,
we see that there is a matching of $H_j$ covering all but $(|L_j|+|L_{j+1}|)O(n^{-1/3})$ vertices.
This matching corresponds to a $Q_2$-packing, call it $\cP_j$, whose elements cover all but $O(n^{-1/3}(|L_j|+|L_{j+1}|))$ edges of $L_j\cup L_{j+1}$. 
\end{proof}

Let $\cP'=\cup_{j\in I,~ j\equiv 0 \pmod 3} \cP_j$. It is a $Q_2$-packing that covers all but $o(|M_0 \cup M_1|)$ edges of $M_0\cup M_1$, 
and does not cover any edge from $M_2$. Let $F$ denote the $o(|M_0 \cup M_1|)$ uncovered edges of $M_0\cup M_1$.
Now augment $\cP'$ to a maximal $Q_2$-packing $\cP$ of $Q_n$. 
We claim that each element of $\cP \setminus \cP'$ contains an edge from $F \cup (E(Q_n) \setminus M)$.
Indeed this just follows from the obvious fact that each $Q_2$ contains edges from  precisely two consecutive layers, hence
each $Q_2$ in $\cP \setminus \cP'$ has an edge which is not from $M_2$, thus from $F \cup (E(Q_n) \setminus M)$.

But now, since $|F \cup (E(Q_n) \setminus M)| = o(|M_0 \cup M_1|)+o(||Q_n||) = o(||Q_n||)$, it follows that
$$
\cl(Q_n,Q_2) \le |\cP| = \frac{2}{3}|M| + o(||Q_n||) \le \frac{2}{3}\frac{||Q_n||}{||Q_2||}(1+o(1))\;.
$$

Next consider the case $d \geq 3$.
We shall apply a similar idea as in the case $d=2$, by first finding a packing $\cP'$ of the middle layers with $Q_d$. 
Let $M_0$ be the union of $L_j$'s such that $L_j\subseteq M$ and $j\equiv 0 \pmod d$.
First we find a packing $\cP'$ such that $M_0$  is covered almost completely,  then we augment this packing with a few copies of $Q_d$ so that the resulting packing is
maximal. In  what follows we assume that $d$ is odd. For $d$ even the argument is very similar.
Notice that each $Q_d$ has edges from precisely $d$ consecutive layers, so when $d$ is odd, the {\em middle layer} of a $Q_d$ is well-defined.

\begin{lemma}\label{l:claim2}
Let $j\in I$ and $d\geq 3$. There is a $Q_d$-packing denoted $\cP_j$, such that each member of $\cP_j$ contains at least one edge of $L_j$ in its middle layer and such that $\cP_j$ covers all but at most $o(|L_j|)$ edges of $L_j$.
\end{lemma}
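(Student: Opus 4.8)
The plan is to mimic the proof of Lemma \ref{l:claim1}, replacing the auxiliary $4$-uniform hypergraph of $Q_2$-copies with a $d2^{d-1}$-uniform hypergraph of $Q_d$-copies, but restricting attention to those copies of $Q_d$ whose middle layer is precisely $L_j$. First I would set up the hypergraph $H_j$ whose vertex set is the edge set of $Q_n$ and whose hyperedges are (the edge sets of) the copies of $Q_d$ in $Q_n$ that have $L_j$ as their middle layer. Since $d$ is odd, a copy of $Q_d$ occupies exactly $d$ consecutive layers $L_{j-(d-1)/2},\dots,L_{j+(d-1)/2}$, and among these the middle one, $L_j$, is well-defined; so every hyperedge contains at least one edge from $L_j$, which is exactly the first requirement on $\cP_j$. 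Because any two copies of $Q_d$ intersect in at most one edge (two distinct subcubes cannot share two edges), $H_j$ is linear.

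Next I would compute the degrees. Fix an edge $e=(u,v)$ with $u$ of weight $w-1$, $v$ of weight $w$ (so $e\in L_w$). A copy of $Q_d$ through $e$ with middle layer $L_j$ is determined by choosing which $d$ coordinate-directions are "free"; one of these directions is forced (the one in which $u,v$ differ), and the position of the subcube along each free direction is then pinned down by the requirement that $e$ lie in the layer-range $[j-(d-1)/2,\,j+(d-1)/2]$ at the correct relative height. Concretely, the number of such copies is the number of ways to pick the remaining $d-1$ free directions among the $n-1$ other coordinates so that the right number of them are "$1$" in $v$ and the rest are "$0$", with the count depending on $w-j$. For an edge $e\in L_j$ itself this degree is a sum of products of binomial coefficients $\binom{w_1(u)}{\cdot}\binom{n-1-w_1(u)}{\cdot}$, which for $j\in I$ (so $u$ has weight $\approx n/2$) is $(1+o(1))\binom{n}{d-1}2^{-(d-1)}\cdot(\text{const}_d)$, i.e. the maximum degree $D$ of $H_j$ is $\Theta_d(n^{d-1})$ and every vertex of positive degree in $H_j$ has degree $(1-o(1))D$. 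Here I would use that across the window $I$ the weights differ from $n/2$ by only $O(\sqrt{n\log n})=o(n)$, so all the relevant binomial ratios are $1+o(1)$; the error in the degree is $o(D)$, which is certainly within $D-O(\sqrt{D\log D})$ for $n$ large.

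Then I would invoke the Alon–Kim–Spencer near-perfect matching theorem exactly as in Lemma \ref{l:claim1}: a $k$-uniform linear hypergraph ($k=d2^{d-1}$ fixed) with minimum positive degree at least $D-O(\sqrt{D\log D})$ has a matching covering all but $o(|V|)$ of its vertices — more precisely all but $O(|V|D^{-1/3})$ of the vertices that lie in some hyperedge. Restricting the count of uncovered vertices to $L_j$: every edge of $L_j$ lies in at least one hyperedge of $H_j$ (pick any valid completion to a $Q_d$), so the matching misses at most $O(|L_j|D^{-1/3})=o(|L_j|)$ edges of $L_j$. That matching is the desired packing $\cP_j$; each of its members contains an edge of $L_j$ in its middle layer by construction, and it covers all but $o(|L_j|)$ edges of $L_j$.

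The main obstacle, and the one point needing care beyond the $d=2$ case, is the degree computation: for $Q_2$ the degree of an edge was simply $j$ or $n-j$, whereas for general $d$ it is a sum of products of binomials that must be shown to be $(1-o(1))$ times its maximum uniformly over all edges in play. The clean way around this is to note we only need the \emph{minimum positive degree} bound, and to observe that for any edge $e$ incident to a vertex of weight in the range $[n/2-\sqrt{n\log n}-d,\,n/2+\sqrt{n\log n}+d]$ — which covers all edges that appear in any hyperedge of $H_j$, since such an edge lies within $(d-1)/2$ layers of $L_j$ and $j\in I$ — each binomial coefficient $\binom{m}{t}$ appearing (with $m=n-1$ or $m$ the weight of an endpoint, $t=O(d)$) changes by a factor $1+o(1)$ as the endpoint weight moves by $o(n)$; hence every positive degree is $(1-o(1))D$. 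One should also remark, as in the even-$d$ aside the paper already flags, that for even $d$ one instead designates (say) the lower of the two central layers as the distinguished layer and the identical argument goes through with $d$ replaced appropriately in the layer-range bookkeeping.
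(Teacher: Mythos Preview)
Your plan has two genuine gaps that make the Alon--Kim--Spencer route fail for $d\ge 3$, and in fact these are precisely the places where the paper's proof departs from the $d=2$ argument.

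First, the hypergraph $H_j$ is \emph{not} linear. Your claim that ``two distinct subcubes cannot share two edges'' is false once $d\ge 3$: two copies of $Q_d$ in $Q_n$ either are vertex-disjoint or intersect in a full $Q_k$ for some $0\le k\le d-1$, and whenever $k\ge 2$ they share $k2^{k-1}\ge 4$ edges. For instance, with $d=3$ the subcubes $(\star,\star,\star,0,0)$ and $(\star,\star,0,\star,0)$ both have middle layer $L_2$ and meet in the $Q_2$ $(\star,\star,0,0,0)$. So the Alon--Kim--Spencer theorem for simple hypergraphs does not apply. The paper instead bounds the co-degree: any two edges $xy$, $x'y'$ lie in at most $Cn^{d-2}$ common hyperedges, which is $o(D)$ since $D=\Theta(n^{d-1})$, and then invokes the Frankl--R\"odl theorem, which only needs co-degree small relative to the degree.

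Second, the degrees are \emph{not} $(1-o(1))D$ across layers. The degree of an edge depends on which $L_i$ it lies in, not just on the endpoint weight being close to $n/2$. For an edge in $L_j$ the degree is $\binom{j-1}{(d-1)/2}\binom{n-j}{(d-1)/2}\sim (n/2)^{d-1}/\bigl((d-1)/2\bigr)!^{\,2}$, while for an edge in the extreme layer $L_{j+(d-1)/2}$ it is $\binom{j+(d-1)/2-1}{d-1}\sim (n/2)^{d-1}/(d-1)!$. These differ by the constant factor $\binom{d-1}{(d-1)/2}>1$, so the minimum positive degree is bounded away from $D$ and the near-regularity hypothesis fails. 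The paper repairs this by building an auxiliary hypergraph $H'\supseteq H_j$: to each layer $L_i$ it adjoins a simple $r$-uniform hypergraph $H_i'$ on vertex set $L_i$ with degrees $D-d_i$ or $D-d_i-1$ (existence via Bollob\'as), making $H'$ almost regular with the same co-degree bound. Frankl--R\"odl then gives a near-perfect matching $M$ in $H'$; the hyperedges of $M$ meeting $L_j$ all come from $H_j$ (since the added $H_i'$ live inside a single layer each, with $H_j'$ empty), and these form the packing $\cP_j$.
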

\begin{proof}
Let $H_j$ be the hypergraph whose vertices are the edges of $L_i$,  $i\in J$, where 
$J= [j- (d-1)/2 , j+ (d-1)/2 ]$ and whose hyperedges are $d2^{d-1}$-element subsets forming a copy of $Q_d$. 
We see that $H_j$ is $r=d2^{d-1}$-uniform and by symmetry, all vertices from the same layer $L_i$ have the same degree.
Let the maximum degree of $H_j$ be $D$, and denote the degree of a vertex from $L_i$ in $H_j$ by $d_i$.
Observe also that $D=d_j$.

We shall construct an $r$-uniform hypergraph $H'$ containing $H_j$ as a spanning subhypergraph such that $H'$ is almost regular, i.e., has degrees $D$ or $D-1$ and such that $E(H')-E(H_j)$ forms a simple hypergraph.
Let $H' = H_j \bigcup_{i\in J }H_i'$, where $H_i'$ is a simple $r$-uniform hypergraph satisfying $V(H_i')= L_i$, and all of the degrees of $H_i'$ are either $D-d_i$ or $D-d_i-1$.
Note that since $d_j = D$ we have that $H_j'$ is an empty hypergraph.
A result of Bollob\'as \cite{bollobas-1980} asserts that such an $H_i'$ exists if
1) $x(D-d_i)+y(D-d_i -1)$ is divisible by $r$, where $x$ and $y$ are the numbers of vertices of degree $D-d_i$ and $D-d_i-1$, respectively, 
2) $|E(H_i')|$ approaches infinity as $|V(H_i)|$ approaches infinity. The second condition is clearly satisfied when $D>d_i$.
To see that the first condition is satisfied for some $x$ and $y$, $x+y= |V(H_i')|$, 
observe that $x(D-d_i)+ y(D-d_i -1) = (D-d_i)|V(H_i')|-y$, so we can choose $y$ to be an integer between $0$ and $r$ such that 
$(D-d_i)|V(H_i')|-y$ is divisible by $r$.
We have that $|V(H_i')| = |L_i| = \binom{n}{i}i$. So, $H'$ is a hypergraph whose vertices have degrees $D$ or $D-1$ and whose co-degree is at most the co-degree of $H_j$. 

Next we shall compare the degree $D$ of $H'$ and its maximum co-degree $coD$.
We shall view the vertices of $Q_n$ as subsets of $[n]$. For a vertex $x \subseteq [n]$, let $Up(x)$ and $Down(x)$ be the up-set and down-set of $x$, respectively, i.e., 
the set of all supersets of $x$ and the set of all subsets of $x$.  Let $V_k$ be the $k^{th}$ vertex layer of $Q_n$, $k=0, \ldots, n$.

The degree of a vertex  $e=xy$, $x\subseteq y$ in $H_j$ corresponds to the number of copies of $Q_d$'s containing $e$ and having middle layer in $L_j$.
The number of ways to choose the maximal element of such a $Q_d$ is equal to $u= |Up(y)\cap V_{j+(d-1)/2}| \ge c n^{k}$.
The number of ways to choose the minimal element of such a $Q_d$ is equal to $d= |Down(x)\cap V_{j-(d-1)/2-1}| \ge c' n^ {d-1-k}$, 
where $c, c'$ are constants depending on $d$ and $k$ is the distance in $Q_n$ between the vertex-layer containing $y$ and the vertex layer $V_{j+(d-1)/2}$.
Then the degree of $e$ in $H_j$ is at least $cc' n^{d-1}$. 

Now we upper bound the co-degree of two vertices of $H_j$:  $xy, x'y'$,  $x\subseteq y$ and $x'\subseteq y'$. We therefore need to find the  number of copies of $Q_d$ containing $xy$ and $x'y'$  and having middle layer in $L_j$. The number of ways to choose the maximal element of such a $Q_d$ is equal to $|Up(y\cup y')\cap V_{j+(d-1)/2}| \le c'' n^{k}$, where $k$ is the distance in $Q_n$ between the vertex-layer containing $y \cup y'$ and the vertex layer $V_{j+(d-1)/2}$.
The number of ways to choose the minimal element of such a $Q_d$ is equal to $|Down(x\cap x') \cap V_{j-(d-1)/2-1}| \le c'''n^{k'}$, 
where $k'$ is is the distance in $Q_n$ between the vertex-layer containing $x\cap x'$ and the vertex layer $V_{j-(d-1)/2-1}$.
Then the co-degree of $xy$ and $x'y'$ is at most $c''c''' n^{k+k'}$. We see that $x\cap x' \subseteq  y\cup y'$ and $|(y\cup y')-(x\cap x')|\ge 2$, 
so $k'\leq d-2 -k$. Thus  $coD \leq Cn^{d-2}$, for a constant $C$.
Since any two hyperedges from $E(H')-E(H_j)$ intersect in at most one vertex, the maximum co-degree of $H'$ is also at most $Cn^{d-2}$.

We need a result of Frankl and R\"odl \cite{FR-1985} on near perfect matchings of uniform hypergraphs.
They have proved that  for an integer $r \geq 2$ and a real $\beta > 0$ there exists $\mu=\mu(r,\beta) > 0$ such that
if the $r$-uniform hypergraph $L$ has the following properties for some $t$:
(i) The degree of each vertex is between $(1-\mu)t$ and $(1+\mu)t$,
(ii) the maximum co-degree is at most $ \mu t$, then $L$ has a matching of size at least $(|V(L)|/r)(1-\beta)$.
Applying their result to our hypergraph $H'$ (which is almost regular) 
we obtain that it has a matching $M$ that covers all but $o(|V(H')|) = o(|L_j|)$ vertices of $H'$.
In particular, we see that $M$ covers all, but at most $o(|L_j|)$ vertices from $L_j$. Since all edges from $H'$ that contain vertices from $L_j$ are all from $H_j$, we see that the set $M'$ of  hyperedges of $M$ containing vertices from $L_j$ corresponds to pairwise edge-disjoint copies of $Q_d$ with middle layer in $L_j$.
These copies cover all but $o(|L_j|)$ edges of $L_j$. 
Let $\cP_j$ correspond to the hyperedges of $M'$. We have that $|\cP_j| = (|L_j|/m_d)(1+o(1))$, where $m_d$ is the number of edges in the middle layer of $Q_d$:
$$m_d=\binom{d}{(d-1)/2} (d-1)/2 \approx (d/2) 2^d/\sqrt{\pi d/2} = 2^{d-1/2} \sqrt{d}/\sqrt{\pi}.$$
\end{proof}

The rest of the construction is done as in the case $d=2$. 
Let $\cP' = \cup_{j\in I, j \equiv 0 \pmod d}  \cP_j$.
Let $M= M_0\cup \cdots \cup M_{d-1}$, where $e\in M_i$ if and only if $e\in L_j$, $j \equiv i \pmod d$.
Thus, $\cP'$ covers all but $o(|M_0|)$ edges of $M_0$.
Let $F$ denote these $o(|M_0|)$ uncovered edges of $M_0$.
Now augment $\cP'$ to a maximal $Q_d$-packing $\cP$ of $Q_n$. 
We claim that each element of $\cP \setminus \cP'$ contains an edge from $F \cup (E(Q_n) \setminus M)$.
Indeed this just follows from the obvious fact that each $Q_d$ contains edges from precisely $d$ consecutive layers, hence
each $Q_d$ in $\cP \setminus \cP'$ has an edge which is not from $M_1 \cup \cdots \cup M_{d-1}$, thus from $F \cup (E(Q_n) \setminus M)$.

But now, since $|F \cup (E(Q_n) \setminus M)| = o(|M_0|)+o(||Q_n||) = o(||Q_n||)$, it follows that
$$
\cl(Q_n,Q_d) \le |\cP| = \frac{1}{d}\frac{||Q_n||}{m_d} + o(||Q_n||) = \frac{\sqrt{2 \pi} }{\sqrt{d}(1-o_d(1))}  \frac{||Q_n||}{||Q_d||}(1+o_n(1))\;.
$$

\qed

\section{Regular planar tilings}\label{s:tiling}

In this section we consider the regular tessellations (tilings) of the Euclidean plane.
It is well-known that there are only three such tilings. The triangular tiling $R_3$, the square tiling $R_4$, and the hexagonal (honeycomb) tiling $R_6$.
Viewed as infinite graphs, the vertices and edges of $R_k$ ($k=3,4,6$) are those of the regular $k$-gons comprising it.

To naturally define clumsy packing and perfect packing of $R_k$, we consider parametrized finite subgraphs of $R_k$.
Assume that the edges of $R_k$ have unit length and that there is an edge of $R_k$ connecting the origin $(0,0)$ and $(1,0)$. This uniquely defines all the Euclidean points of the vertices of $R_k$.
For an integer $n$, let $R_k(n)$ be the induced subgraph of $R_k$ on vertices inside $[0,n) \times [0,n)$.
So, for example $R_4(n)$ is just the square $n \times n$ grid $Gr_n$.
Let
$$
\cl(R_k) = \lim_{n \rightarrow \infty} \frac{k \cdot \cl(R_k(n),C_k)}{||R_k(n)||} \hspace{1cm} \pp(R_k) = \lim_{n \rightarrow \infty} \frac{k \cdot \pp(R_k(n),C_k)}{||R_k(n)||}\;.
$$
The fact that these limits exist will follow in particular from the proof below. So, in $\cl(R_k)$ and $\pp(R_k)$ we want to measure the ``fraction'' of edges of $R_k$
that are covered by the ``smallest'' (resp. ``largest'') maximal packing of $R_k$.
Note that it is straightforward that $R_3$ has a perfect triangle packing and that $R_4$ has a perfect $C_4$-packing hence $\pp(R_3)=\pp(R_4)=1$.
Clearly, $R_6$ does not have a perfect $C_6$-packing as it is $3$-regular, but it is a straightforward exercise to pack $R_6$ with $C_6$ such that the unpacked
edges form a perfect matching, hence $\pp(R_6)=2/3$. In the next theorem we determine $\cl(R_k)$.
\begin{theorem}\label{t:rk}
$\cl(R_k)=\frac{2}{k+1}$.
\end{theorem}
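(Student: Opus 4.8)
The plan is to recast clumsy $C_k$-packings of $R_k(n)$ in terms of its \emph{dual graph} $D_k$ — whose vertices are the bounded $k$-gonal faces, two being adjacent when they share an edge — and to exploit that $D_k$ is $k$-regular for $k\in\{3,4,6\}$ (the dual of the square tiling is the square tiling, of the triangular tiling is the hexagonal tiling, and of the hexagonal tiling is the triangular tiling). First I would check that every $C_k$ in $R_k$ bounds a single face; this follows from the elementary isoperimetric-type fact that a union of two or more faces of $R_k$ has perimeter strictly greater than $k$ (two faces of $R_k$ share at most one edge, so gluing two faces already gives perimeter $2k-2>k$, and further faces keep it above $k$). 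Consequently a $C_k$-packing of $R_k(n)$ is precisely a set of pairwise edge-disjoint faces, i.e.\ an independent set of $D_k(n)$, and it is maximal exactly when every face shares an edge with some chosen face, i.e.\ when that independent set is also dominating — otherwise the missed face could be added. Writing $F=F_k(n)$ for the number of faces, we have $F=\Theta(n^2)$ and, since every face has $k$ edges, every interior edge lies in two faces, and $R_k(n)$ has only $O(n)$ boundary edges, $||R_k(n)||=\tfrac k2 F+O(n)$.

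For the lower bound I would not use the trivial bound (\ref{e:1}), which here only yields $\cl(R_k)\ge 1/k$ (a $C_k$-free subgraph is obtained by deleting a near-perfect matching of $D_k(n)$, about $F/2$ edges, so $\ex(R_k(n),C_k)=||R_k(n)||-\tfrac F2+o(F)$). Instead, let $\cP$ be any maximal $C_k$-packing. For each $C\in\cP$, the face $C$ together with its at most $k$ neighbours in $D_k(n)$ is a set of at most $k+1$ faces, and by maximality these $|\cP|$ sets cover all $F$ faces. Hence $(k+1)|\cP|\ge F$, so $\cl(R_k(n),C_k)\ge F/(k+1)$; dividing by $||R_k(n)||=\tfrac k2 F(1+o(1))$ and multiplying by $k$ gives $\cl(R_k)\ge\tfrac{2}{k+1}$.

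The matching upper bound I would obtain from an explicit periodic perfect $1$-code of $D_k$ — equivalently, a periodic partition of the faces of $R_k$ into \emph{clusters}, each consisting of a central face together with the $k$ faces adjacent to it. For $k=4$ the clusters are the plus-pentominoes of the classical perfect Lee code $\{(x,y):2x+y\equiv 0\pmod 5\}$; for $k=3$ each cluster is a triangle of side $2$ (the triangular tiling rescaled by $2$), whose central small triangle is edge-adjacent to its three corner triangles; for $k=6$ each cluster is the flower of $7$ hexagons (a hexagon and its six neighbours), and these flowers tile the plane, their centres forming an index-$7$ sublattice of the triangular lattice of hexagon-centres (the Eisenstein-integer code, $\ZZ[\omega]/(3+\omega)\cong\ZZ/7$). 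In every case the set $\cP^{*}$ of central faces is a $C_k$-packing: the $k$ edges of a central face all lie inside its own cluster, so distinct central faces are edge-disjoint, and every face is either central or a petal sharing an edge with its cluster's centre, so $\cP^{*}$ is maximal. Since each cluster has $k+1$ faces, $|\cP^{*}|=F/(k+1)$ apart from the $O(n)$ clusters truncated by the boundary of $R_k(n)$; completing $\cP^{*}$ greedily to a maximal packing of all of $R_k(n)$ costs only $O(n)$ further copies. Hence $\cl(R_k(n),C_k)=\tfrac{F}{k+1}(1+o(1))$, which forces the limit in the definition of $\cl(R_k)$ to exist and to equal $\tfrac{2}{k+1}$, completing the proof together with the lower bound.

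I expect the main obstacle to be the explicit flower tilings used in the upper bound: for $k=4$ (plus-pentominoes) and $k=3$ (side-$2$ triangles) they are elementary, but for $k=6$ one must carefully establish that the $7$-hexagon flower tiles the plane, i.e.\ that the triangular grid possesses a perfect $1$-code of density $1/7$. Everything else — the isoperimetric remark that a $k$-cycle in $R_k$ bounds a single face, the identity $||R_k(n)||=\tfrac k2 F+O(n)$, and the $O(n)$ boundary corrections when restricting the periodic code to $R_k(n)$ and completing it to a maximal packing — is routine bookkeeping.
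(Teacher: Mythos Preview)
Your proof is correct and takes a genuinely different route from the paper's.  The paper treats the three values of $k$ separately: for $k=3$ and $k=6$ it covers $R_k$ by overlapping copies of a small ``gadget'' $H$ and shows, by a weighted edge-count inside each gadget, that any maximal packing must use at least a $2/(k+1)$ fraction of the edges; for $k=4$ it counts how many grid positions of a $C_4$ can intersect a fixed $C_4$ (namely $(2\cdot 2-1)^2-4=5$) and derives the lower bound from that.  The upper bounds are given by three ad hoc periodic patterns drawn in figures.

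Your argument instead passes uniformly to the dual graph $D_k$, observes that a maximal $C_k$-packing is exactly an independent dominating set of $D_k$, and gets the lower bound in one line from $k$-regularity of $D_k$ (closed neighbourhoods have size $k+1$).  The upper bound is then the existence of a perfect $1$-code in $D_k$, which you exhibit as the side-$2$ triangle tiling, the Lee/plus-pentomino code, and the $7$-hexagon flower tiling.  This buys you a single short argument that handles all three tilings at once and explains \emph{why} the answer is $2/(k+1)$: it is $1/(1+\deg D_k)$, the efficient-domination density.  The paper's gadget calculations, by contrast, are more self-contained (they do not appeal to the existence of perfect codes) and in the square case actually yield bounds for packing $R_4(d)$ into $R_4$ for all $d$, not just $d=2$; your dual-graph framework does not immediately extend to that.
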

\begin{proof}
Consider first the case of $R_3$. The pattern on the right side of Figure \ref{f:triangle}
shows how to obtain a maximal triangle packing of $R_3$ where the ratio between covered and uncovered edges is $\frac{1}{2}$.
More formally, this pattern shows that $\frac{\cl(R_3(n),C_3)}{||R_3(n)||} \le \frac{1}{6}+o_n(1)$ implying that
$\limsup_{n \rightarrow \infty} \frac{3 \cdot \cl(R_3(n),C_3)}{||R_3(n)||} \le \frac{1}{2}$.
Consider the subgraph $H$ of $R_3$ shown on the left side of Figure \ref{f:triangle}.
Observe that $H$ has three internal edges and $6$ boundary edges.
Clearly, there is a covering $\C$ of $R_3$ with copies of $H$ such that the internal edges of each copy are pairwise edge-disjoint, while the boundary edges are shared between two copies in $\C$.
Consider some maximal $C_3$-packing $\cP$ of $R_3$ and consider some $H \in \C$.
We weigh the number of edges of $H$ covered by $\cP$ by giving each covered internal edge of
$H$ a weight $1$ and each covered boundary edge the weight $\frac{1}{2}$.
We claim that the weight of each $H \in \C$ is at least $3$. Indeed, if the internal triangle of $H$ is in $\cP$, we are done. Otherwise, at least one of the internal edges is covered, which means that there is a non-internal triangle of $H$ in $\cP$ which consists of one internal edge and two boundary edges. This already yields a weight of
$2$. But then the other two non-internal triangles of $H$ must intersect elements of $\P$ as well, so each gives an additional weight of at least $\frac{1}{2}$. Now, since the weight of each $H \in \C$ is at least $3$
and since its total weight to the edge count is $6$ as it has three internal edges and
six boundary edges (so $6 \times \frac{1}{2}+ 3 \times 1 = 6$), we have that
$\frac{\cl(R_3(n),C_3)}{||R_3(n)||} \ge \frac{1}{6}-o_n(1)$ implying that
$\liminf_{n \rightarrow \infty} \frac{3 \cdot \cl(R_3(n),C_3)}{||R_3(n)||} \ge \frac{1}{2}$.
\begin{figure}[h!]
\center
\includegraphics[scale=0.3, trim=100 150 100 0]{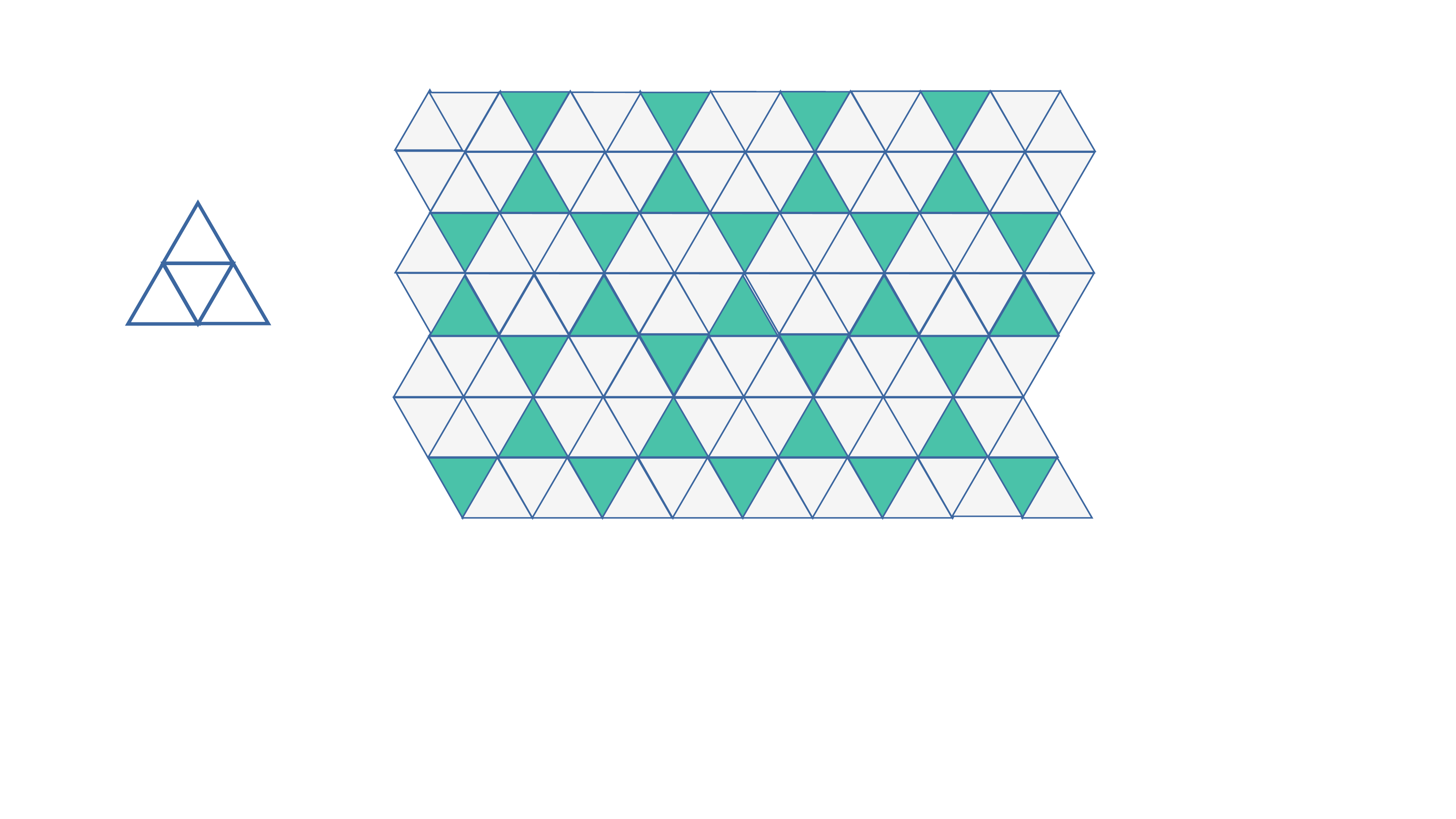}
\caption{A clumsy triangle packing of $R_3$ (right) and a gadget subgraph for the lower bound proof (left).}
\label{f:triangle}
\end{figure}

Consider next the case of $R_6$. The pattern on the right side of Figure \ref{f:hexagon}
shows how to obtain a maximal $C_6$-packing of $R_6$. Take the packing to consist of the internal $C_6$ of each colored region.
It is easy to verify that the ratio between covered and uncovered edges of this maximal packing is $2/7$.
More formally, this pattern shows that $\frac{\cl(R_6(n),C_6)}{||R_3(n)||} \le \frac{1}{21}+o_n(1)$ implying that
$\limsup_{n \rightarrow \infty} \frac{6 \cdot \cl(R_R(n),C_6)}{||R_3(n)||} \le \frac{2}{7}$.
Consider now the subgraph $H$ of $R_6$ shown on the left side of Figure \ref{f:hexagon}.
Observe that $H$ has $12$ internal edges and $18$ boundary edges.
As the left side of Figure \ref{f:hexagon} shows, there is a covering $\C$ of $R_6$ with copies of $H$ such that the internal edges of each copy are pairwise edge-disjoint, while the boundary edges are shared between two copies in $\C$.
Consider some maximal $C_6$-packing $\cP$ of $R_6$ and consider some $H \in \C$.
We weigh the number of edges of $H$ covered by $\cP$ by giving each covered internal edge of
$H$ a weight $1$ and each covered boundary edge the weight $\frac{1}{2}$.
We claim that the weight of each $H \in \C$ is at least $6$. Indeed, if the internal $C_6$ of $H$ is in $\cP$, we are done.
Otherwise, at least one of the internal edges of the internal $C_6$ is covered, which means that there is a non-internal $C_6$ of $H$, call it $X \in \cP$ which consists of three internal edges of $H$ and three boundary edges of $H$. This already yields a weight of
$4.5$. If $\cP$ contains an additional non-internal $C_6$ of $H$, then we get a weight of $9$ and we are done. Otherwise, the three non-internal $C_6$ of $H$ 
which are edge-disjoint from $X$ each contribute at least $\frac{1}{2}$ as $\cP$ is a maximal packing, hence overall weight at least $6$ as claimed.
Now, since the weight of each $H \in \C$ is at least $6$
and since its total weight to the edge count is $21$ as it has $12$ internal edges and
$18$ boundary edges (so $18 \times \frac{1}{2}+ 12 \times 1 = 21$), we have that
$\frac{\cl(R_6(n),C_6)}{||R_6(n)||} \ge \frac{1}{21}-o_n(1)$ implying that
$\liminf_{n \rightarrow \infty} \frac{6 \cdot \cl(R_6(n),C_6)}{||R_6(n)||} \ge \frac{2}{7}$.
\begin{figure}[h!]
\center
\includegraphics[scale=0.3, trim=100 150 100 0]{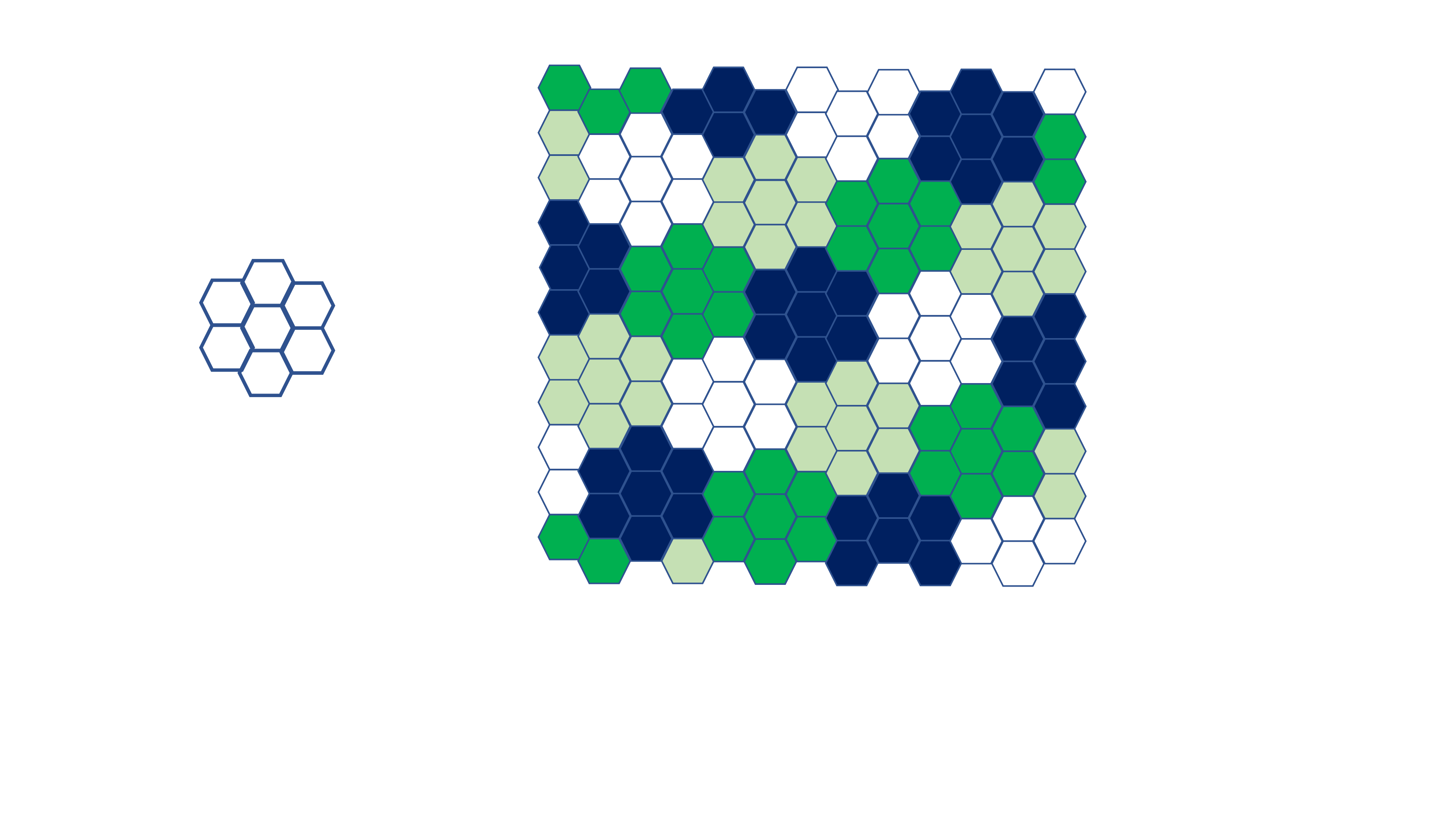}
\caption{A clumsy triangle packing of $R_6$ (right) and a gadget subgraph for the lower bound proof (left).}
\label{f:hexagon}
\end{figure}

Consider next the case of $R_4$. The idea in this case is close to the one on clumsy packings of polyominoes \cite{WAU-2014}.
The pattern on the right side of Figure \ref{f:square}
shows how to obtain a maximal $R_4(d)$-packing of $R_4$. In fact, the figure specifically shows the case $d=4$ but the generalization is obvious. Notice also that
$R_4(2)=C_4$.
Observing the proportion of the edges of the packing in each column and each row of $R_4$, we have
\begin{eqnarray*}
\cl(R_4(n), R_4(d)) & \leq & \frac{ d(d-1)} {2(d-1)(2d-2) + 1} \frac{||R_4(n)||}{||R_4(d)||}(1+o(1))\\
&= & \frac{ d^2-d}{ 4d^2 -8d +5} \frac{||R_4(n)||}{||R_4(d)||}(1+o(1)).
\end{eqnarray*}
For the lower bound, assume that $\cP$ is a maximal $R_4(d)$-packing of $R_4(n)$.
We see that each copy of $R_4(d)$ in $R_4(n)$ shares an edge with a copy of an element of $\cP$.
From the left side of Figure \ref{f:square} we see marked all the positions of the lower left corner of a copy of $R_4(d)$ that shares an edge with the marked copy
of $R_4(d)$. The number of such positions is $(2d-1)^2-4$. 
Therefore, if $x$ is the total number of $R_4(d)$'s in $R_4(n)$, then 
$x\leq ((2d-1)^2-4)\cP (1+o(1))$. Since $x=n^2(1-o(1))$, we have that 
\begin{eqnarray*}
\cl(R_4(n), R_4(d))& \geq & |\cP| \\
& \geq & \frac{n^2}{((2d-1)^2-4)} (1+o(1)) \\
&= &\frac{||R_4(n)||}{2((2d-1)^2-4)}  (1+o(1))\\
& = & \frac{(d-1)2d}{ 2((2d-1)^2 -4)} \frac{||R_4(n)||}{||R_4(d)||} (1+o(1))\\
&= & \frac{ d^2-d}{ 4d^2 - 4d -3} \frac{||R_4(n)||}{||R_4(n)||} (1+o(1)).
\end{eqnarray*}
Note that the upper and the lower bounds match for $d=2$, giving the claimed  value $\cl(R_4)=2/5$.
\begin{figure}[h!]
\begin{center}
\begin{tabular}{cc}
\includegraphics[scale=0.2, trim=0 0 0 2000]{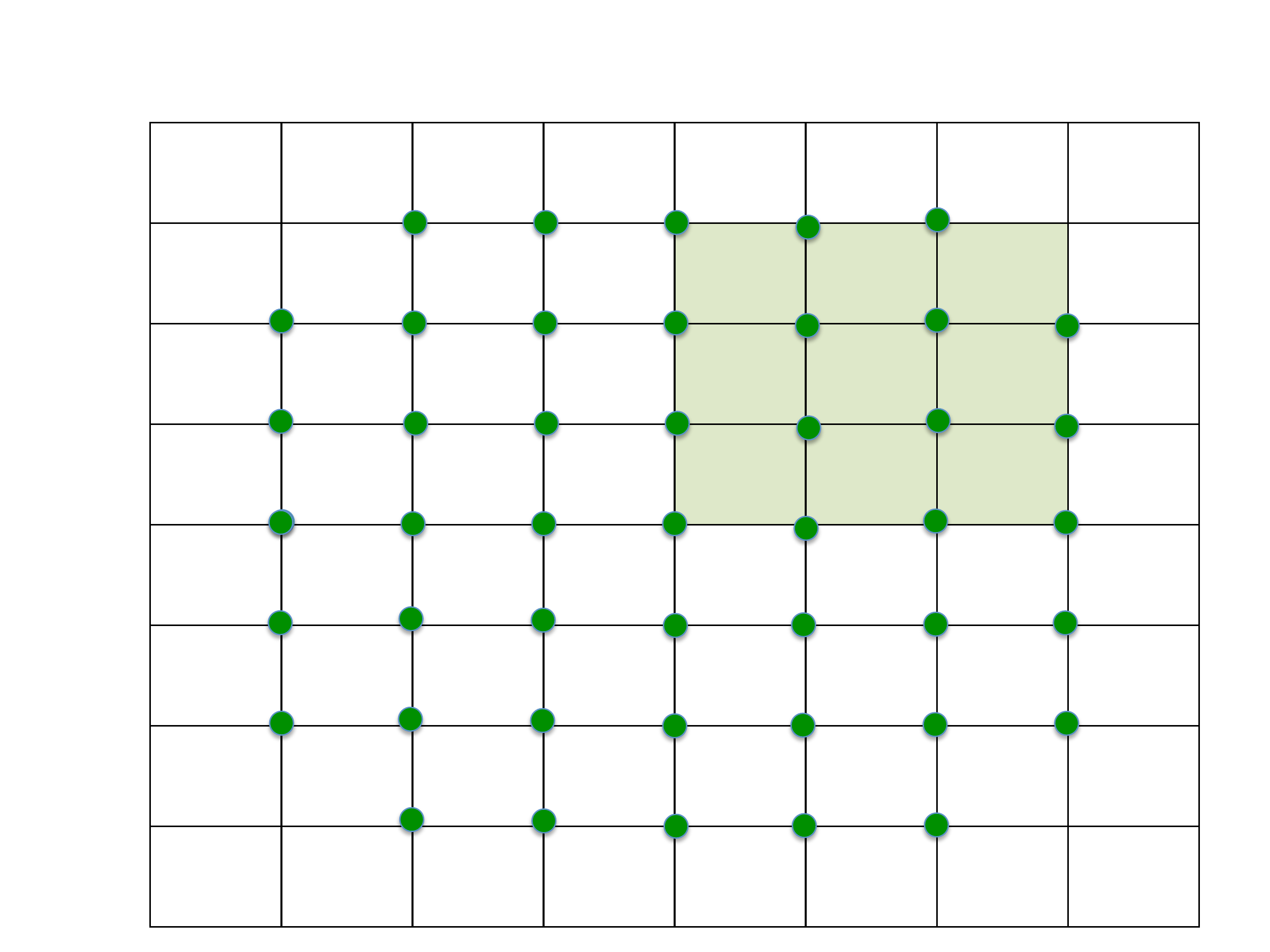} & \includegraphics[scale=0.35, trim=0 410 0 0]{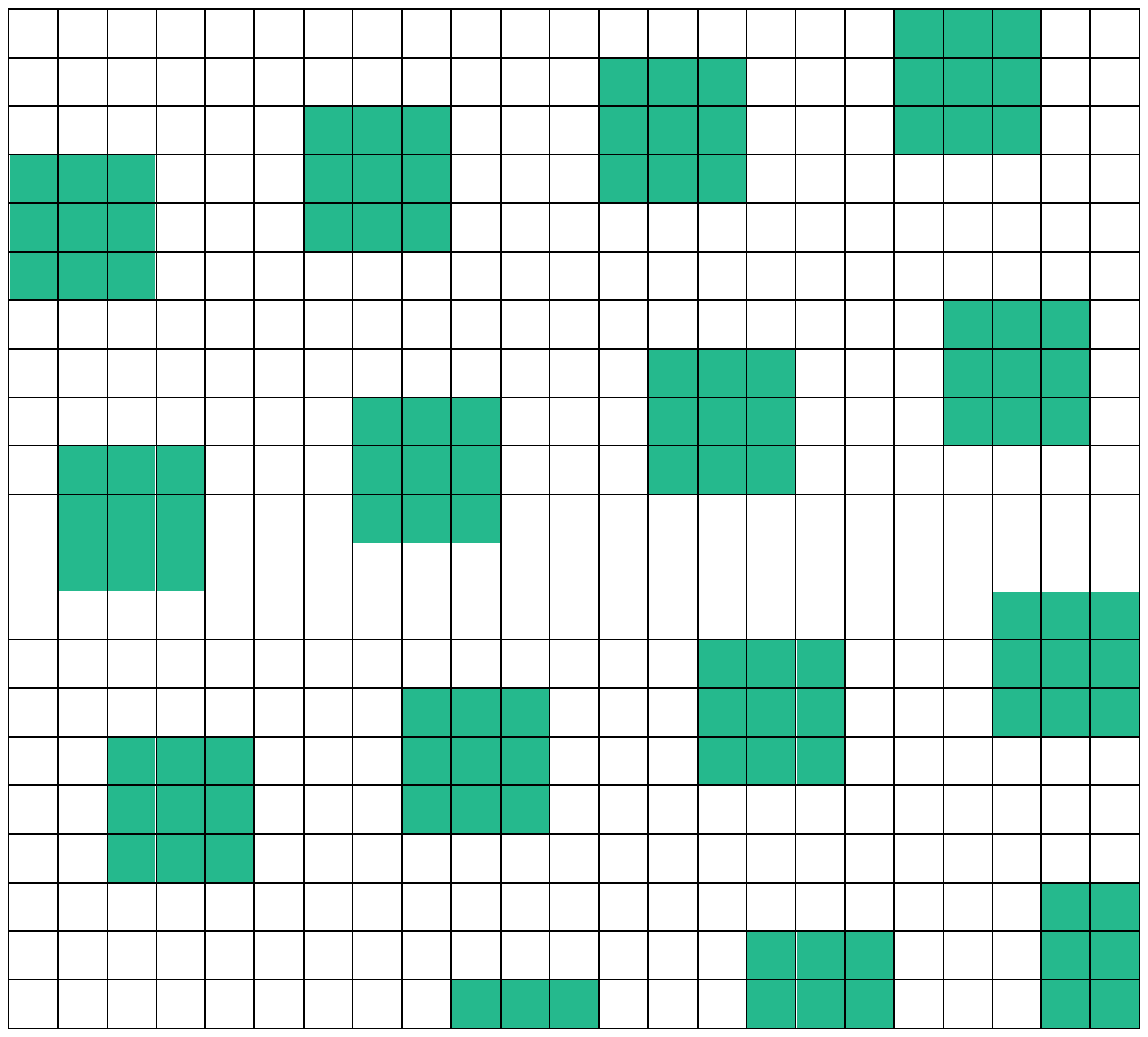}
\end{tabular}
\caption{Clumsy packing of $R_4(d)$ in $R_4$ (right) and the lower bound argument (left).}
\label{f:square}
\end{center}
\end{figure}
\end{proof}

\bibliographystyle{plain}

\bibliography{references}

\end{document}